   \theoremstyle{plain}
   \newtheorem{thm}{Theorem}[section]
   \newtheorem{prop}[thm]{Proposition}
   \newtheorem{cor}[thm]{Corollary}
   \theoremstyle{definition}
   \newtheorem{defn}[thm]{Definition}
   \newtheorem{example}[thm]{Example}
   \theoremstyle{remark}
\author{A. Korchagin}
\date{}
\address{Moscow Institute of Physics and Technology, Dolgoprudny, 141701, Russia}
\email{mogilevmedved@yandex.ru}
\thanks{The author acknowledges partial support by the RFBR grant No. 18-01-00398.}
\title{On diagonal actions of free group on the Cantor set}
\begin{document}
\maketitle

\begin{abstract}
We study diagonal actions $\varphi:\mathbb{F}_2\curvearrowright\partial\mathbb{F}_2\times K$ on the Cantor set which are given by $\varphi_a=\partial_a\times\alpha,\varphi_b=\partial_b\times\beta$. Under some restrictions on $\alpha,\beta$ we compute $K_*(C(\partial\mathbb{F}_2\times K)\rtimes_r\mathbb{F}_2)$. As an application in the case of $\alpha$ is Denjoy homeomorphism of the Cantor set and $\beta=id$ we will show that $C(\partial\mathbb{F}_2\times K)\rtimes_r\mathbb{F}_2$ is Kirchberg algebra with $K_*(C(\partial\mathbb{F}_2\times K)\rtimes_r\mathbb{F}_2)=(\mathbb{Z}^\infty,\mathbb{Z}^\infty)$. Also we will check that $C^*$-crossed product by Denjoy homeomorphism on the Cantor set is $C^*$-algebra generated by weighted shift, namely $C(K)\rtimes\mathbb{Z}\cong C^*(T_x)$ where $x\in \{1,2\}^\mathbb{Z}$ is two-sided Fibonacci sequence.
\end{abstract}

\section*{Introduction}

It was proven in \cite{Ph} the following fundamental result: let $A,B$ be unital Kirchberg algebras (i.e. separable, simple, nuclear, purely infinite $C^*$-algebras) in UCT-class. Then $A\cong B$ iff $K_*(A)\cong K_*(B)$ and $[1]_A=[1]_B$; moreover for all $(G_0,G_1,g)$ where $G_i$ are countable commutative groups and $g\in G_0$ there exits unital Kirchberg algebra in UCT class such that $(K_0(A),K_1(A),[1]_A)=(G_0,G_1,g)$. While we have complete classification it is useful to have the concrete realization for all Kirchberg algebras. There are a lot of results in this direction, for example it is known that all Kirchberg algebras in UCT-class with finitely generated $K_0$-group and free $K_1$-group can be realized as graph algebras (see \cite{SpSemi}). This result
allows to prove that these algebras are semiprojective. There exist more general models for Kirchberg algebras: for example all Kirchberg algebras can be realized as $C^*$-algebra of k-graph (see \cite{SpG}) or $C^*$-algebra of topological graph (see \cite{Kat}). But this realizations are rather difficult to work with it. 

Another approach to study Kirchberg algebras is to realize their as crossed product of the form $C(X)\rtimes_r G$ since there are a lot of tools of working with crossed products. One of the first result in this direction is construction two non-isomorphic Kirchberg algebras which both of them can be realized as $C(K)\rtimes_r\mathbb{F}_2$, where $K$ is the Cantor set (see \cite{ES}). The article \cite{Suz} was the serious breakthrough. Suzuki constructed uncountable family of pairwise non-isomorphic Kirchberg algebras which can be realized as $C(\partial\mathbb{F}_n\times K)\rtimes_r\mathbb{F}_n$ where crossed product is taken by {\it diagonal action} $\varphi:\mathbb{F}_n\curvearrowright \partial\mathbb{F}_n\times K$ which is defined via formula $\varphi_{g_j}=\partial_{g_j}\times \alpha_j$ where $\alpha_j$ are some homeomorphisms of $K$ (here $K$ is the Cantor set) and $\partial:\mathbb{F}_n\curvearrowright\partial\mathbb{F}_n$ is left side multiplication action on Gromov boundary. The benefit of diagonal action is its automatic topological freeness and amenability. Suzuki consider the case when $\alpha_j$ have nice finite dimensional approximation and compute $K_*$-groups for corresponding Kircberg algebras.

In this article we compute $K_*(C(\partial\mathbb{F}_2\times K)\rtimes_r\mathbb{F}_2)$ for some diagonal actions $\mathbb{F}_2\curvearrowright
\partial\mathbb{F}_2\times K$ which are not covered by results of \cite{Suz}. Our model example would be Denjoy diagonal action $\varphi:\mathbb{F}_2\curvearrowright\partial\mathbb{F}_2\times K$ which is defined via $\varphi_a=\partial_a\times\alpha$, $\varphi_b=\partial_b\times id$. For corresponding crossed product we have $K_*(C(\partial\mathbb{F}_2\times K)\rtimes_r\mathbb{F}_2)=(\mathbb{Z}^\infty,\mathbb{Z}^\infty)$. Remark that $K_*$-groups of Suzuki examples have torsion. All these results are attempts to understand which Kirchberg algebras in UCT-class can be realized as $C(X)\rtimes G$ for some $X$ and $G$.

The second section is about concrete realization of algebra $C(K)\rtimes \mathbb{Z}$ where $K$ is the Cantor set and crossed product is taken by Denjoy homeomorphism. We merge some well known results from theory of $C^*$-algebras and dynamic theory and get that such algebra can be realized as $C^*$-algebra generated by weighted shift.

The author is grateful to V.M. Manuilov for helpful discussions and useful comments.

\section{Computation of $K$-theory of crossed product}\label{Section1}

We will need the following proposition:

\begin{prop}\label{L1}{\rm (\cite{ES}, Proposition 2.1)}
Let $G$ be a countable group, $K$ be the Cantor set and $\alpha:G\curvearrowright K$ be an action. Then $C^*$-algebra $C(K)\rtimes_r G$ is Kirchberg algebra precisely when the following conditions hold:

\begin{enumerate}
\item
$\alpha$ is minimal,
\item
$\alpha$ is topological free,
\item
$\alpha$ is amenable,
\item
Every non-zero projection $p\in C(K)$ is infinite in $C(K)\rtimes_r G$.
\end{enumerate}
\end{prop}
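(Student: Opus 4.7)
The plan is to verify each of the four defining properties of a Kirchberg algebra (separability, nuclearity, simplicity, pure infiniteness) separately, matching each to one of the conditions (1)--(4). Separability is automatic: since $K$ is metrizable, $C(K)$ is separable, and $G$ is countable, so $C(K)\rtimes_r G$ is separable without any dynamical hypothesis.

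For nuclearity I would invoke the theorem of Anantharaman--Delaroche: since $C(K)$ is commutative (hence nuclear), the reduced crossed product $C(K)\rtimes_r G$ is nuclear precisely when $\alpha$ is (topologically) amenable, which is condition~(3). Granted amenability, the reduced and full crossed products coincide, and the Archbold--Spielberg theorem asserts that simplicity of the crossed product is equivalent to $\alpha$ being both minimal and topologically free. Necessity of minimality is clear, since any closed proper invariant subset of $K$ yields a proper $G$-invariant ideal of $C(K)$ and hence of the crossed product. Topological freeness is used through the canonical conditional expectation $E:C(K)\rtimes_r G\to C(K)$: under~(2), $E$ detects ideals in the sense that every nonzero ideal meets $C(K)$, and combined with minimality this rules out proper ideals, giving the equivalence with~(1)--(2).

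The main obstacle is the equivalence with~(4), i.e.\ the characterization of pure infiniteness of the simple algebra. The forward direction is immediate, since in a purely infinite simple $C^*$-algebra every nonzero projection is infinite. For the converse, granted the other three properties together with~(4), I would exploit the abundance of projections in $C(K)$ (clopen sets form a basis of the topology of $K$) together with topological freeness to show that every nonzero positive element $a\in C(K)\rtimes_r G$ is Cuntz-dominated by some nonzero projection $p\in C(K)$; combined with the infiniteness of each such $p$ granted by~(4), Cuntz's criterion for pure infiniteness of simple algebras then concludes. The delicate step is the Cuntz-domination claim, typically established by a Kishimoto-type cutting/averaging argument: topological freeness allows one to conjugate $a$ by a suitable projection $q\in C(K)$ so as to make the off-diagonal part of $qaq$ (relative to $E$) arbitrarily small, reducing matters to an element essentially supported in $C(K)$, where condition~(4) takes over.
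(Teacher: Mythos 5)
The paper does not prove this statement at all: Proposition~1.1 is imported verbatim as Proposition~2.1 of \cite{ES}, so there is no in-paper argument to compare you against. Judged on its own, your outline follows the standard route (essentially the one underlying \cite{ES}): separability is automatic, nuclearity of $C(K)\rtimes_r G$ is equivalent to amenability of the action by Anantharaman-Delaroche, simplicity comes from minimality together with the ideal-intersection property supplied by topological freeness, and pure infiniteness is reduced to condition (4) via Cuntz subequivalence against projections of $C(K)$. Two caveats. First, you attribute to Archbold--Spielberg an \emph{equivalence} between simplicity and ``minimal and topologically free''; their theorem gives only one implication (topological freeness implies every nonzero ideal meets $C(K)$), so the necessity of condition (2) --- which is needed for the ``only if'' half of the proposition, since the statement is an equivalence --- is not covered by what you cite and requires a separate argument. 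Second, the entire content of the ``if'' direction sits in the Cuntz-domination claim (every nonzero positive element of the crossed product dominates a nonzero projection of $C(K)$), which you correctly isolate but only describe: as written this is a plan rather than a proof, and the Kishimoto-type cutting argument you gesture at is exactly where the work is (it is carried out in the literature, e.g.\ by Laca--Spielberg and R{\o}rdam--Sierakowski, and you would need either to invoke those results explicitly or to reproduce the estimate). If your intention is to prove the proposition by citation, as the paper does, say so explicitly; if you intend to reprove it, these two steps must be filled in.
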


It is well known that $\partial\mathbb{F}_2$ with topology induced from the product topology is homeomorphic to the Cantor set. Let $\partial:\mathbb{F}_2\curvearrowright\partial\mathbb{F}_2$ be the boundary action which is minimal, topological free and amenable (c.f. \cite{O} and \cite{Suz}).

Let us consider $X=K\times\partial\mathbb{F}_2$ and diagonal action $\varphi:\mathbb{F}_2\curvearrowright X$ given by $\varphi_a=\alpha\times\partial_a$, $\varphi_b=\beta\times\partial_b$ for some homeomorphisms $\alpha$ and $\beta$ of the Cantor set $K$. Also denote by $\alpha$, $\beta$ induced actions on $C(K)$, $K_0(C(K))=C(K,\mathbb{Z})$ and also denote by $\alpha$, $\beta$ actions on $K_0(C(K))^n$ given by $\alpha(f_1,\ldots,f_n)=(\alpha f_1,\ldots ,\alpha f_n)$ (this will not become ambiguity later).

For every finite reduced word $\omega$ over alphabet $\{a,a^{-1},b,b^{-1}\}$ let us consider characteristic function $p_\omega\in C(\mathbb{F}_2,\mathbb{Z})$ of the clopen set $\{g\in\partial\mathbb{F}_2|g=\omega g^\prime\text{ - infinite reduced word}\}$. Let us denote by $\omega_*$ first letter of word $\omega$.

\begin{prop}
Let $\varphi:\mathbb{F}_2\curvearrowright X$ be as above. Let $\alpha\beta\alpha^{-1}\beta^{-1}=1$ and $\alpha$ is minimal. Then $\phi$ satisfies conditions of Proposition 1.1 and so $C(X)\rtimes_r\mathbb{F}_2$ is Kirchberg algebra (cross product is taken by $\varphi$).
\end{prop}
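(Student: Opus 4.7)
The plan is to verify each of the four conditions of Proposition~\ref{L1} for $\varphi$, in order of increasing difficulty.

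Conditions (2) and (3) I expect to inherit directly from the boundary action $\partial$. For topological freeness, $\mathrm{Fix}(\varphi_w)\subset K\times\mathrm{Fix}(\partial_w)$ has empty interior for every $w\neq e$, since $\partial$ is topologically free. For amenability, I use that amenability of an action on a compact space is preserved under taking the diagonal product with any other action.

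For (1), minimality, the hypothesis $\alpha\beta=\beta\alpha$ is the essential tool: the $K$-action of $w\in\mathbb{F}_2$ depends only on the abelianization $(m,n)$ of $w$ and equals $\alpha^m\beta^n$. Given $(x,g)\in X$ and a basic clopen $A\times U_v\subset X$, where $U_v\subset\partial\mathbb{F}_2$ is the cylinder on a finite reduced word $v$, I use minimality of $\alpha$ to pick $n_0\in\mathbb{Z}$ with $\alpha^{n_0}x\in A$, and then produce a reduced word $w\in\mathbb{F}_2$ with (i) abelianization $(n_0,0)$, (ii) prefix $v$, and (iii) last letter distinct from the inverse of the first letter of $g$. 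Such $w$ exists by a straightforward combinatorial argument: after fixing $v$, one appends a reduced suffix over $\{a^{\pm 1},b^{\pm 1}\}$ realizing the required signed letter-counts while avoiding a single forbidden final letter. Then $\varphi_w(x,g)\in A\times U_v$.

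The bulk of the work is (4). For a non-empty clopen $E\subset X$ I will construct a partial isometry in $C(X)\rtimes_r\mathbb{F}_2$ witnessing that $1_E$ is infinite. Total disconnectedness of $X$ reduces to $E=A\times B$ with $A\subset K$, $B\subset\partial\mathbb{F}_2$ clopen, and I may further assume $B\subsetneq\partial\mathbb{F}_2$ by passing to a sub-cylinder (a projection containing an infinite sub-projection is itself infinite). Given $M\in\mathbb{N}$, Kakutani--Rokhlin applied to the minimal $\mathbb{Z}$-action $\alpha\curvearrowright K$ yields a first-return partition $A=\bigsqcup_j A_j$ with heights $h_j\geq M$ satisfying $\alpha^{h_j}(A_j)\subset A$ and $A=\bigsqcup_j\alpha^{h_j}(A_j)$. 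For each $j$ I pick a conjugator $c_j\in\mathbb{F}_2$ so that $g_j:=c_ja^{h_j}c_j^{-1}$ has its attracting fixed point in $B$ and its repelling fixed point outside $B$; such $c_j$ exist by density of $\mathbb{F}_2$-orbits on $\partial\mathbb{F}_2\times\partial\mathbb{F}_2$ off the diagonal. The abelianization of $g_j$ is $(h_j,0)$, so by $\alpha\beta=\beta\alpha$ its $K$-action is $\alpha^{h_j}$; and for $M$ large, $\partial_{g_j}(B)$ is a small clopen neighborhood of the attractor strictly contained in $B$, pairwise disjoint for different $j$ after choosing $c_j$ with distinct attractors. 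Setting $E_j=A_j\times B$ and $w=\sum_j u_{g_j}\cdot 1_{E_j}$, where $u_g$ is the canonical unitary for $g\in\mathbb{F}_2$, the disjointness of the $\alpha^{h_j}(A_j)$ gives $w^*w=1_E$, while $ww^*=\sum_j 1_{\alpha^{h_j}(A_j)\times\partial_{g_j}(B)}\lneq 1_E$, proving $1_E$ infinite.

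The main obstacle is step (4), specifically the simultaneous control of both coordinates: the heights $h_j$ are dictated by the Rokhlin structure for $\alpha$ on $A$, while the boundary contraction requires an attracting fixed point inside $B$. The commutativity $\alpha\beta=\beta\alpha$ is used precisely here, since it allows me to choose any conjugate of $a^{h_j}$---hence to place the attracting fixed point anywhere in $\partial\mathbb{F}_2$---without changing the $K$-action $\alpha^{h_j}$.
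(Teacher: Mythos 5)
Your treatment of conditions (1)--(3) is sound and essentially matches the paper: topological freeness and amenability are inherited from the boundary action, and minimality is obtained by combining a boundary prefix with a suffix whose abelianization realizes the needed power of $\alpha$, using $\alpha\beta=\beta\alpha$ to reduce the $K$-action of a word to its exponent sums. The problems are in step (4), where you replace a one-line trick with Rokhlin towers, and the construction as written has genuine gaps. First, the claim that you can choose $c_j$ so that $g_j=c_ja^{h_j}c_j^{-1}$ has its attracting fixed point in $B$ and its repelling fixed point outside $B$ is false in general, and the justification via ``density of $\mathbb{F}_2$-orbits on $\partial\mathbb{F}_2\times\partial\mathbb{F}_2$ off the diagonal'' does not apply: the orbit of the specific pair $(a^{\infty},a^{-\infty})$ is not dense there. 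Writing $c=c_0a^m$ in reduced form with $c_0$ not ending in $a^{\pm1}$, the two fixed points of $c a^{h}c^{-1}$ are $c_0a^{\infty}$ and $c_0a^{-\infty}$, which both lie in the cylinder $U_{c_0}$ whenever $c_0\neq e$. Hence for $B=U_b$ (say) the attractor lies in $B$ if and only if the repeller does, and your construction cannot start. (It is repairable --- shrink $B$ further to a cylinder $U_{v a}$ and take $c_j$ with $c_0=v$ --- but that is a different argument from the one you gave, and when both fixed points lie inside $B$ one actually gets $\partial_{g}(B)=B$, not a proper contraction.) Second, $w=\sum_j u_{g_j}1_{E_j}$ need not satisfy $w^*w=1_E$: the cross terms $1_{E_j}u_{g_j^{-1}g_k}1_{E_k}=u_{g_j^{-1}g_k}1_{\varphi_{g_k^{-1}g_j}(E_j)\cap E_k}$ do not vanish automatically, since the Rokhlin structure controls the sets $\alpha^{h_j}(A_j)$ but not $\alpha^{h_j-h_k}(A_j)\cap A_k$; you would have to arrange disjointness in the boundary coordinate, which you do not address. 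There is also a mild circularity between the tower height bound $M$ and the conjugators $c_j$, whose number depends on the partition.

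The key observation you are missing is that commutativity makes the Rokhlin machinery unnecessary: since the $K$-action of $g$ depends only on its abelianization, one can pick a \emph{single} element $g$ with exponent sums of $a$ and $b$ both zero --- the paper uses $g=\omega\sigma_1ba^{N}b^{M}a\sigma_2$ with suitable $N,M$ and non-cancelling letters $\sigma_1,\sigma_2$ --- which therefore acts as the identity on $K$ while mapping the cylinder $U_\omega$ strictly into itself (because $g\omega$ begins with $\omega$ and is strictly longer). Then $u_g(p_U\otimes p_\omega)u_g^*=p_U\otimes p_{g\omega}\lneq p_U\otimes p_\omega$ exhibits the projection as equivalent to a proper subprojection, with no towers, no sums of partial isometries, and no fixed-point bookkeeping. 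I would rewrite step (4) along these lines.
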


\begin{proof}
Since $\partial$ is topological free and amenable, it follows that $\varphi$ is also topological free and amenable (preserving these properties by direct product is one of the motivations of studying diagonal actions).

Let us prove minimality of $\varphi$, i.e. every orbit is dense. Fix $(x,\omega_1),(y,\omega_2)\in K\times\mathbb{F}_2=X$. It is enough to check that for every $\varepsilon$ there exists $g\in\mathbb{F}_2$ such that
$$\rho_X(g\cdot (x,\omega_1),(y,\omega_2))=\rho_K(g\cdot x,y)+\rho_{\partial\mathbb{F}_2}(\omega_1,\omega_2)<\varepsilon$$
Let $m=[\log_2(1/\varepsilon)]+2$ and
$$g=\omega_2[m]\sigma_1ba^Nb\sigma_2$$
where $\omega_2[m]$ is $m$-symbol prefix of infinite word $\omega_2$, $\sigma_1\in\{a,a^{-1},b,b^{-1}\}$ is arbitrary letter such that there is no reduction in word $\omega_2[m]\sigma_1b$ and $\sigma_2$ is a letter for which there is no reduction in $b\sigma_2\omega_1$. $N$ will be defined later. It is easy to check that $\rho_{\mathbb{F}_2}(g\cdot \omega_1,\omega_2)<\varepsilon/2$.

Let $N_1$ (resp. $N_2$) be the sum of all powers of $a$ (resp. of $b$) in $\omega_2[m]\sigma_1b^2\sigma_2$. Since $\alpha$ and $\beta$ commute, we have $g\cdot x=\alpha^N(\alpha^{N_1}\beta^{N_2}(x)).$ Since $\alpha$ is minimal we can find $N$ such that $\rho_K(\alpha^N(\alpha^{N_1}\beta^{N_2}(x)),y)<\varepsilon/2$. So the action $\varphi$ is also minimal.

Fix some clopen base $\Omega$ for topology of $K$, and consider clopen base $\{supp(p_\omega)\}$ for topology of $\partial\mathbb{F}_2$ where $\omega$ runs over all finite words. For every projection $p\in C(K\times\partial\mathbb{F}_2)$ we can find finite sum decomposition:
$$p=\sum_jp_{U_j}\otimes p_{\omega_j}$$
for some $\omega_j$ and $U_j\in\Omega$ (here $p_U$ is characteristic function of $U$). It is easy to check that sum of two orthogonal infinite projection is also infinite projection.  So it is enough to prove that $p_U\otimes p_\omega$ is infinite in $C(X)\rtimes_r\mathbb{F}_2$. Put
$$g=\omega\sigma_1ba^Nb^Ma\sigma_2$$
where $\sigma_1$ is a letter from alphabet $\{a,a^{-1},b,b^{-1}\}$ for which there is no reduction in word $\omega\sigma_1b$ and $\sigma_2$ is a letter for which there is no reduction in $a\sigma_2\omega$. Choose $N$ and $M$ such that sum of all powers of $a$ and sum of all powers of $b$ equal to zero. We have $p_{g\cdot\omega}\subset p_\omega$ and $p_{g\cdot x}\neq p_\omega$. Since $\alpha$ and $\beta$ commute, we obtain that $g\cdot U=U$ (it is easy to check that $g|_K=id$). So we have strict embedding:
$$g\cdot(p_U\otimes p_\omega)=p_U\otimes p_{g\cdot\omega}\subset p_U\otimes p_\omega$$
So every projection in $C(K)$ is infinite in $C(K)\rtimes_r\mathbb{F}_2$.
\end{proof}

Let us remind that there exists sex-terms exact sequence for K-theory of reduced crossed product of arbitrary $C^*$-algebra by free group (see \cite{PimVoi}, Theorem 3.5):
$$
\begin{diagram}
\node{K_0(A)^n} \arrow{e,t}{\zeta}
\node{K_0(A)} \arrow{e}
\node{K_0(A\rtimes_r\mathbb{F}_n)} \arrow{s}\\
\node{K_1(A\rtimes_r\mathbb{F}_n)}\arrow{n}
\node{K_1(A)} \arrow{w}
\node{K_1(A)^n}\arrow{w,t}{\zeta}
\end{diagram}
$$
Here $\zeta(\gamma_1,\gamma_2,\ldots,\gamma_n)=(1-a_1^*)\gamma_1+\ldots+(1-a_n^*)\gamma_n$, where $a_1,\ldots a_n$ are free generators of $\mathbb{F}_n$, $a_j^*$ are induced actions on $K_*(A)$. In case of $A=C(K)$ (where $K$ is Cantor set) the above exact sequence becomes the following exact sequence:
$$0\to K_1(C(K)\rtimes_r\mathbb{F}_2)\to C(K,\mathbb{Z})^2\xrightarrow{\zeta}C(K,\mathbb{Z})\to K_0(C(K)\rtimes_r\mathbb{F}_2)\to 0$$
So we have
$$K_0(C(K)\rtimes_r\mathbb{F}_2)=C(K,\mathbb{Z})\slash\{a_j^*(f)=f\}$$
$$K_1(C(K)\rtimes_r\mathbb{F}_2)=Ker(\zeta)$$
Here quotient in $K_0$ is taken by subgroup generated by all $a_j^*(f)-f$ where $j\in\{1,2\}$ and $f\in C(K,\mathbb{Z})$.

\begin{prop}
Let $\varphi:\mathbb{F}_2\curvearrowright X$ be as above. Then for corresponding crossed product we have:
$$K_0(C(X)\rtimes_r\mathbb{F}_2)=
C(K,\mathbb{Z})^2\slash
\scriptsize\begin{array}{c}\\ \\
((2-\alpha-\alpha^{-1})f,(\alpha^{-1}-1)(\beta-1)f)=0\\ ((\beta^{-1}-1)(\alpha-1)f,(2-\beta-\beta^{-1})f)=0
\end{array}\normalsize
$$
\end{prop}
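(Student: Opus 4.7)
I apply the Pimsner--Voiculescu six-term sequence (as stated in the excerpt) to $A=C(X)$. Since $X=K\times\partial\mathbb{F}_2$ is totally disconnected we have $K_1(C(X))=0$, and the sequence collapses to $K_0(C(X)\rtimes_r\mathbb{F}_2)\cong C(X,\mathbb{Z})/\mathrm{im}(\zeta)$ with $\zeta(F_1,F_2)=(1-a^*)F_1+(1-b^*)F_2$, where $a^*=\varphi_a^*$ and $b^*=\varphi_b^*$. Under the identification $C(X,\mathbb{Z})\cong C(K,\mathbb{Z})\otimes C(\partial\mathbb{F}_2,\mathbb{Z})$ spanned by the $f\otimes p_\omega$, the action takes the form $a^*(f\otimes p_\omega)=\alpha f\otimes \partial_a^* p_\omega$, and direct inspection of the boundary action gives $\partial_a^*p_\omega=p_{a\omega}$ when $a\omega$ is reduced, $\partial_a^*p_{a^{-1}\omega'}=p_{\omega'}$ for $|\omega'|\geq 1$ with $\omega'$ not starting with $a$, and the crucial exception $\partial_a^*p_{a^{-1}}=1-p_a$; symmetric formulas hold for $b^*$.

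Applying $\zeta$ to the generators $f\otimes 1$ and $f\otimes p_{a^{-1}}$ (and their $b$-analogues) yields, modulo $\mathrm{im}(\zeta)$, the two fundamental identities
\[
(\alpha-1)f\otimes 1 \equiv 0 \quad\text{(I)}, \qquad f\otimes p_{a^{-1}}\equiv \alpha f\otimes(1-p_a) \quad\text{(II)},
\]
and their $b$-analogues. Expanding $f\otimes 1=\sum_{\omega\in\{a,a^{-1},b,b^{-1}\}}f\otimes p_\omega$, substituting (II) and its $b$-analogue, and absorbing $\alpha f\otimes 1\equiv f\otimes 1\equiv \beta f\otimes 1$ via (I), a short manipulation yields the key formula in $K_0$:
\[
[f\otimes 1]=[(\alpha-1)f\otimes p_a+(\beta-1)f\otimes p_b] \qquad (\ast).
\]
Combined with length reduction $f\otimes p_\omega\equiv \alpha^{\pm 1}f\otimes p_{\omega'}$ (or the $\beta^{\pm 1}$ variant) for $|\omega|\geq 2$, obtained by applying $(1-a^*)$ or $(1-b^*)$ to a shorter word, and with (II) and its $b$-analogue for $\omega\in\{a^{-1},b^{-1}\}$, this shows that every $f\otimes p_\omega$ lies in the image of
\[
\Phi:C(K,\mathbb{Z})^2\to K_0(C(X)\rtimes_r\mathbb{F}_2),\qquad \Phi(f_1,f_2):=[f_1\otimes p_a+f_2\otimes p_b],
\]
so $\Phi$ is surjective.

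For the inclusion $R\subseteq\ker(\Phi)$: because $a^*$ commutes with $(1-a^*)$, one has $(1-(a^*)^{-1})=-(a^*)^{-1}(1-a^*)$ acting on $f\otimes 1$, so $(1-\alpha^{-1})h\otimes 1\equiv 0$ for every $h\in C(K,\mathbb{Z})$. Substituting $f=(1-\alpha^{-1})g$ in $(\ast)$ and simplifying via the algebraic identities $(\alpha-1)(1-\alpha^{-1})=-(2-\alpha-\alpha^{-1})$ and $(\beta-1)(1-\alpha^{-1})=-(\alpha^{-1}-1)(\beta-1)$ (both using $\alpha\beta=\beta\alpha$) yields $\Phi\bigl((2-\alpha-\alpha^{-1})g,(\alpha^{-1}-1)(\beta-1)g\bigr)=0$, which is relation~1. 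Relation~2 arises symmetrically from the $(1-\beta^{-1})$-torsion of $[g\otimes 1]$.

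The reverse inclusion $\ker(\Phi)\subseteq R$ is the main obstacle. My plan is to construct a candidate inverse $\Psi:C(X,\mathbb{Z})\to C(K,\mathbb{Z})^2/R$ by setting $\Psi(f\otimes p_a)=(f,0)$, $\Psi(f\otimes p_b)=(0,f)$, $\Psi(f\otimes 1)=((\alpha-1)f,(\beta-1)f)$, and extending to arbitrary $f\otimes p_\omega$ via the length-reduction formulas together with the rewriting $f\otimes p_{a^{-1}}=\alpha f\otimes 1-\alpha f\otimes p_a$ (and its $b$-analogue). Well-definedness boils down to $\Psi\circ\zeta\equiv 0\pmod R$, which is checked by case analysis on the leading letter of $\omega$ inside $(1-a^*)(f\otimes p_\omega)$ and $(1-b^*)(f\otimes p_\omega)$. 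The only nontrivial cases, $\omega\in\{a^{-1},b^{-1}\}$, are exactly the cases where the special identity $\partial_g^*p_{g^{-1}}=1-p_g$ intervenes; tracing $\Psi$ through these two cases produces precisely relations~1 and~2, closing the computation.
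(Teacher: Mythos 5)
Your overall strategy is sound and is essentially the paper's own: apply Pimsner--Voiculescu with $K_1(C(X))=0$ to reduce to the coinvariants $C(X,\mathbb{Z})/\operatorname{im}(\zeta)$, and then build explicit mutually inverse homomorphisms between that quotient and $C(K,\mathbb{Z})^2/R$. The one organizational difference is that the paper first passes through an intermediate presentation $C(K,\mathbb{Z})^4$ modulo four relations (via the maps $f_x\mapsto f\otimes p_x$ and $f\otimes p_\omega\mapsto(\omega_*\omega^{-1}f)_{\omega_*}$) and then eliminates $f_{a^{-1}},f_{b^{-1}}$ by Gaussian elimination, whereas you go straight to the two-generator presentation. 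Your surjectivity argument and the verification $R\subseteq\ker\Phi$ (via the identity $(\ast)$ applied to $f=(1-\alpha^{-1})g$ and $f=(1-\beta^{-1})g$) are correct and complete; note only that both you and the stated relations need $\alpha\beta=\beta\alpha$ to reconcile the ordering $(\alpha^{-1}-1)(\beta-1)$ with the $(\beta-1)(\alpha^{-1}-1)$ that the computation actually produces, so the hypothesis of Proposition~1.2 is implicitly in force here.

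The gap is in the last step. You assert that well-definedness of $\Psi$ ``boils down to $\Psi\circ\zeta\equiv 0\pmod R$,'' but that is only the second of two required checks. The elements $f\otimes p_\omega$ do not freely generate $C(X,\mathbb{Z})$: they satisfy the refinement relations $f\otimes p_\omega=\sum_{x}f\otimes p_{\omega x}$ (and $f\otimes 1=\sum_x f\otimes p_x$), so a map prescribed on these generators by recursive length-reduction formulas must first be shown to respect them before it is a homomorphism on $C(X,\mathbb{Z})$ at all. This is not a formal identity: for example, comparing $\Psi(f\otimes 1)=((\alpha-1)f,(\beta-1)f)$ with $\sum_x\Psi(f\otimes p_x)$ leaves the discrepancy $\bigl((\alpha-1)^2f+(\alpha-1)(\beta-1)f,\;(\alpha-1)(\beta-1)f+(\beta-1)^2f\bigr)$, which vanishes only because it equals $r_1(-\alpha f)+r_2(-\beta f)$ in $R$; a similar computation is needed for $f\otimes p_a=f\otimes p_{aa}+f\otimes p_{ab}+f\otimes p_{ab^{-1}}$. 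This refinement check is precisely the step the paper carries out explicitly (``the result does not change under refinement''), and without it the injectivity direction $\ker\Phi\subseteq R$ is not established. Relatedly, your list of ``nontrivial cases'' for $\Psi\circ\zeta\equiv 0$ should include the constants, since $(1-a^*)(f\otimes 1)=(1-\alpha)f\otimes 1$ maps under $\Psi$ to an element that lies in $R$ only by virtue of relation~1. Once these verifications are added, the argument closes.
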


\begin{proof}
Consider arbitrary $f\in C(K,\mathbb{Z})$. Put $f_a=(f,0,0,0), f_{a^{-1}}=(0,f,0,0), f_b=(0,0,f,0), f_{b^{-1}}=(0,0,0,f),\Delta_af=f_a+f_{a^{-1}},\Delta_bf=f_b+f_{b^{-1}}$. From six-terms exact sequence we can get isomorphism:
$$K_0(C(X)\rtimes_r\mathbb{F}_2)=C(K\times\partial\mathbb{F}_2,\mathbb{Z})\slash
\scriptsize\begin{array}{c}\\ \\
\alpha\times\partial_a(F)=F\\
\beta\times\partial_b(F)=F
\end{array}\normalsize
$$
Here $F$ runs over all $C(K\times\partial\mathbb{F}_2)$. First let us construct isomorphism:
$$C(K\times\partial\mathbb{F}_2,\mathbb{Z})\slash
\scriptsize\begin{array}{c}\\ \\
\alpha\times\partial_a(F)=F\\
\beta\times\partial_b(F)=F
\end{array}\normalsize
\overset{\Phi}{\underset{\Psi}{\rightleftharpoons}}
C(K,\mathbb{Z})^4\slash
\scriptsize\begin{array}{c}\\ \\ \\ \\
\alpha f_a=f_a+\Delta_b f\\
\alpha^{-1}f_{a^{-1}}=f_{a^{-1}}+\Delta_b f\\
\beta f_b=f_b+\Delta_a f\\
\beta^{-1}f_{b^{-1}}=f_{b^{-1}}+\Delta_a f
\end{array}\normalsize
$$
Define $\Psi$ on the generators in the following way:
$$\Psi(f_x)=f\otimes p_x,\:\:\:\text{где}\:\: \: x\in\{a,a^{-1},b,b^{-1}\}$$
It is easy to check that this map is homomorphism since "right relations" is a special case of "left relations".

To construct $\Phi$ notice first that functions of the form $f\otimes p_\omega$ generate $C(K\times\partial\mathbb{F}_2)$ where $f\in C(K)$ and $\omega$ is finite word. Put $\omega=x_1x_2\ldots x_n$ where $x_j\in \{a,a^{-1},b,b^{-1}\}$. Then we have:
$$[f\otimes p_\omega]=[x_{n-1}^{-1}\ldots x_2^{-1}x_1^{-1}(f\otimes p_\omega)]=[f^\prime\otimes p_{x_n}]$$
where $f^\prime=x_{n-1}^{-1}\ldots x_2^{-1} x_1^{-1}(f)$ (in case of single letter word multiplication has some special feature. You should keep in mind that $\partial_{a^{-1}}p_a=p_a+p_b+p_{b^{-1}}$). So in any equivalence class $[f\otimes p_\omega]$ we can find element of the form $f^\prime\otimes p_{\omega^\prime}$ such that $\omega^\prime$ is single letter. So we can construct map
$$C(K\times\partial\mathbb{F}_2,\mathbb{Z})\slash\scriptsize\begin{array}{c}\\ \\
\alpha\times\partial_a(F)=F\\
\beta\times\partial_b(F)=F
\end{array}\normalsize\to C(K,\mathbb{Z})^4$$
$$[f\otimes p_\omega]\mapsto f^\prime_{\omega^\prime}$$
It is easy to see that quotient map does not depend on choice of $f^\prime$ and $\omega^\prime$ and it is well defined homomorphism. Let us prove it more careful. Consider arbitrary $g=\sum g_j\otimes p_{\omega_j}\in C(K\times\partial\mathbb{F}_2,\mathbb{Z})$ and put
$$\Phi(g)=\sum(((\omega_j)_*\omega_j^{-1})(g_j))_{(\omega_j)_*}$$
I think it it better to comment this formula: here we should calculate $(\omega_j)_*\omega_j^{-1}$ word in alphabet $\{\alpha,\alpha^{-1},\beta,\beta^{-1}\}$ instead of $\{a,a^{-1},b,b^{-1}\}$, then apply resulting homomorphism to $g_j$ and finally we should take element in $C(K,\mathbb{Z})^4$ whose three coordinates equal to zero and $\omega_j)_*$-$th$ coordinate is equal to result of previous step. For example $\Phi(f\otimes p_{babba})=(\beta^{-2}\alpha^{-1}\beta^{-1}(f))_a$.

It is easy to see from definition that $\Phi$ is homomorphism (since it preserves sums). Let us check that it is well defined, i.e. result does not depend on realization of $g$ as sum. First consider the case when $g=f\otimes p_\omega$ (without loss of generality we can suppose that $\omega_*=a$). Then $f\otimes p_a=f\otimes p_{\omega a}+f\otimes p_{\omega b}+f\otimes p_{\omega b^{-1}}$. Then we have
$$\Phi(f\otimes p_{\omega})=(\alpha\omega^{-1}(f))_a=$$
$$=(\omega^{-1}(f))_a+(\omega^{-1}(f))_b+(\omega^{-1}(f))_{b^{-1}})=\Phi(t\otimes p_{\omega a})+\Phi(f\otimes p_{\omega b})+\Phi(f\otimes p_{\omega b^{-1}})$$
The first and third equation is just relation $\alpha f_a=f_a+\Delta_b f$ for $\omega^{-1}(f)$. So the result does not change under refinement.

Now let we have for $g\in C(K\times\partial\mathbb{F}_2)$ two decompositions: $g=\sum g_j\otimes p_{\omega_j}$ and $g=\sum g^\prime_j\otimes p_{\omega^\prime_j}$. Then we can consider small enough refinement $\{supp(\tilde\omega_j)\}$ for topology of $\partial\mathbb{F}_2$ such that equation $\sum g_j\otimes p_{\tilde\omega_j}=\sum g^\prime_j\otimes p_{\tilde\omega_j}$ holds just in the case of $g_j=g_j^\prime$ for all $j$. Since we have checked that $\Phi$ does not change under refinement we can conclude that $\Phi$ is well defined.

Let us prove that $\Phi$ preserve relations (without loss of generality we can consider first relation $\alpha\times\partial_a(F)=F$). Since $\Phi$ preserve sums we only need to check it for $F=f\otimes p_\omega$. Consider first case $\omega=a^{-1}$. Since $\partial_a p_{a^{-1}}=p_{a^{-1}}+p_b+p_{b^{-1}}$ we have
$$\Phi(t\otimes p_{a^{-1}}-\alpha\times\partial_a(f\otimes p_{a^{-1}}))=\Phi(f\otimes p_{a^{-1}}-\alpha f\otimes (p_a+p_b+p_{b^{-1}}))=$$
$$=f_{a^{-1}}-(\alpha f)_{a^{-1}}-(\alpha f)_b-(\alpha f)_{b^{-1}}$$
So $\Phi$ maps relation to relation (since right-hand side is just $\alpha f_a-f_a-\Delta_b f=0$ for function $\alpha f$). Consider the second case $\omega\neq a^{-1}$. Since $\omega_*=(a\omega)_*$ and $\partial_a p_\omega=p_{a\omega}$ we can conclude
$$\Phi(f\otimes p_\omega)-\alpha\times\partial_a(f\otimes p_\omega))=(\omega_*\omega_{-1}f)_{\omega_*}-\Phi(\alpha f\otimes p_{a\omega})=$$
$$=(\omega_*\omega_{-1}f)_{\omega_*}-((a\omega)_*\omega^{-1}\alpha^{-1}\alpha f)_{(a\omega)_*}=0$$
So $\Phi$ and $\Psi$ are well defined homomorphisms and it is easy to see that $\Phi\circ \Psi=id$ and $\Psi\circ\Phi=id$.

Now let us construct isomorphisms
$$C(K,\mathbb{Z})^4\slash
\scriptsize\begin{array}{c}\\ \\ \\ \\
\alpha f_a=f_a+\Delta_b f\\
\alpha^{-1}f_{a^{-1}}=f_{a^{-1}}+\Delta_b f\\
\beta f_b=f_b+\Delta_a f\\
\beta^{-1}f_{b^{-1}}=f_{b^{-1}}+\Delta_a f
\end{array}\normalsize
\overset{\Gamma}{\underset{\Theta}{\rightleftharpoons}}
C(K,\mathbb{Z})^2\slash
\scriptsize\begin{array}{c}\\ \\
((2-\alpha-\alpha^{-1})f,(\alpha^{-1}-1)(\beta-1)f)=0\\ ((\beta^{-1}-1)(\alpha-1)f,(2-\beta-\beta^{-1})f)=0
\end{array}\normalsize $$
We are going to use $\mathbb{Z}$-Gaussian elimination which will not change quotient but can simplify relations. We have
$$
\begin{cases}
\alpha f_a=f_a+\Delta_b f\\
\alpha^{-1}f_{a^{-1}}=f_{a^{-1}}+\Delta_b f\\
\beta f_b=f_b+\Delta_a f\\
\beta^{-1}f_{b^{-1}}=f_{b^{-1}}+\Delta_a f
\end{cases}
\Longleftrightarrow
\begin{cases}
\alpha f_a-f_a=\Delta_b f\\
\alpha^{-1}f_{a^{-1}}-f_{a^{-1}}=\Delta_b f\\
\beta f_b-f_b=\Delta_a f\\
\beta^{-1}f_{b^{-1}}-f_{b^{-1}}=\Delta_a f
\end{cases}
\Longleftrightarrow
$$
$$
\Longleftrightarrow
\begin{cases}
\alpha f_a-f_a=\Delta_b f\\
\alpha^{-1}f_{a^{-1}}-f_{a^{-1}}=\alpha f_a-f_a\\
\beta f_b-f_b=\Delta_a f\\
\beta^{-1}f_{b^{-1}}-f_{b^{-1}}=\beta f_b-f_b
\end{cases}
\Longleftrightarrow
\begin{cases}
\alpha f_a-f_a=\Delta_b f\\
(\alpha^{-1}-1)(\Delta_a f-f_a)=\alpha f_a-f_a\\
\beta f_b-f_b=\Delta_a f\\
(\beta^{-1}-1)(\Delta_b-f_b)=\beta f_b-f_b
\end{cases}
\Longrightarrow
$$
$$
\Longrightarrow
\begin{cases}
\alpha f_a-f_a=\Delta_b f\\
(\alpha^{-1}-1)((\beta-1)f_b-f_a)=\alpha f_a-f_a\\
\beta f_b-f_b=\Delta_a f\\
(\beta^{-1}-1)((\alpha-1)f_a-f_b)=\beta f_b-f_b
\end{cases}
\Longrightarrow
\begin{cases}
\alpha f_a-f_a=\Delta_b f\\
(\alpha^{-1}-1)(\beta-1)f_b+(2-\alpha-\alpha^{-1})f_a=0\\
\beta f_b-f_b=\Delta_a f\\
(\beta^{-1}-1)(\alpha-1)f_a+(2-\beta-\beta^{-1})f_b=0
\end{cases}
$$
Since sets $\{f_a,f_{a^{-1}},f_b,f_{b^{-1}}\}$ and $\{f_a,\Delta_af,f_b,\Delta_bf\}$ generate each other and since first and third final relation can be read as definition of $\Delta_a f$ and $\Delta_b f$ we can reduce generator set to $\{f_a,f_b\}$ with second and forth final equations as relations.

Now let us define $\Gamma$ on new generators in the following way (remind that $f_{a^{-1}}=\Delta_a f-f_a$ and $f_{b^{-1}}=\Delta_b f-f_b$):
$$\Gamma(f_a)=(f,0)$$
$$\Gamma(f_b)=(0,f)$$
$$\Gamma(f_{a^{-1}})=(-f,(\beta-1)f)$$
$$\Gamma(f_{b^{-1}})=((\alpha-1)f,-f)$$
Define $\Theta$ on the generators in the following way:
$$\Theta((f,0))=f_a$$
$$\Theta((0,f))=f_b$$
It is easy to see that they are homomorphisms and $\Theta\circ\Gamma=id$, $\Gamma\circ\Theta=id$.
\end{proof}

\begin{prop}
Let $\varphi:\mathbb{F}_2\curvearrowright X$ be as above, $\alpha\beta\alpha^{-1}\beta^{-1}=1$ and $Im(\alpha-1)$ is infinitely generated group. Then
$$K_1(C(X)\rtimes_r\mathbb{F}_2)=\mathbb{Z}^\infty$$
\end{prop}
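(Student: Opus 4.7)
The plan is to use the Pimsner--Voiculescu identification $K_1(C(X)\rtimes_r\mathbb{F}_2)=\ker(\zeta)$ recalled above, with $\zeta(F_1,F_2)=(1-\varphi_a^*)F_1+(1-\varphi_b^*)F_2$ on $C(X,\mathbb{Z})^2$. As a subgroup of the free abelian group $C(X,\mathbb{Z})^2$ of countable rank, $\ker(\zeta)$ is itself free abelian of at most countable rank; hence it suffices to exhibit a subgroup of infinite rank.

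The key construction is the homomorphism
$$\iota:C(K,\mathbb{Z})\to C(X,\mathbb{Z})^2,\qquad \iota(k):=\bigl((1-\beta)k\otimes 1,\;-(1-\alpha)k\otimes 1\bigr),$$
where $1\in C(\partial\mathbb{F}_2,\mathbb{Z})$ is the constant function. I would verify $\iota(k)\in\ker(\zeta)$ directly: since $\partial_a^*$ and $\partial_b^*$ fix the constant $1$, we have $\varphi_a^*(h\otimes 1)=\alpha h\otimes 1$ and $\varphi_b^*(h\otimes 1)=\beta h\otimes 1$ for every $h\in C(K,\mathbb{Z})$. Applying this and the commutation $\alpha\beta=\beta\alpha$ gives
$$(1-\varphi_a^*)\bigl((1-\beta)k\otimes 1\bigr)=(1-\alpha)(1-\beta)k\otimes 1,\qquad (1-\varphi_b^*)\bigl(-(1-\alpha)k\otimes 1\bigr)=-(1-\beta)(1-\alpha)k\otimes 1,$$
and these are negatives of each other by commutativity, so $\zeta(\iota(k))=0$.

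Next, $\iota(k)=0$ in $C(X,\mathbb{Z})^2$ forces $(1-\alpha)k=(1-\beta)k=0$, so $\ker(\iota)=\ker(\alpha-1)\cap\ker(\beta-1)$. Hence $\mathrm{Im}(\iota)\cong C(K,\mathbb{Z})/(\ker(\alpha-1)\cap\ker(\beta-1))$, which surjects onto $C(K,\mathbb{Z})/\ker(\alpha-1)\cong\mathrm{Im}(\alpha-1)$. The hypothesis supplies the final ingredient: $\mathrm{Im}(\alpha-1)$ is infinitely generated, so $\mathrm{Im}(\iota)$ has infinite rank, and therefore so does $K_1\supseteq\mathrm{Im}(\iota)$. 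Combined with the upper bound above, $K_1\cong\mathbb{Z}^\infty$.

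There is no genuine obstacle: the only point requiring care is the sign/convention check in $\zeta\circ\iota=0$, but once one invokes that $\partial_a^*$ and $\partial_b^*$ fix constants, the computation reduces to the commutativity $(1-\alpha)(1-\beta)=(1-\beta)(1-\alpha)$ on $C(K,\mathbb{Z})$.
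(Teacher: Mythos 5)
Your proposal is correct and follows essentially the same route as the paper: identify $K_1$ with $\ker(\zeta)$ via Pimsner--Voiculescu, note it is free abelian of countable rank as a subgroup of $C(X,\mathbb{Z})^2$, and exhibit the infinitely generated subgroup $\{((1-\beta)k\otimes 1,\,-(1-\alpha)k\otimes 1)\}\subset\ker(\zeta)$ using the commutativity of $\alpha$ and $\beta$. Your version is slightly more careful than the paper's at the last step, where you justify that this subgroup is infinitely generated by exhibiting the surjection of $\mathrm{Im}(\iota)$ onto $\mathrm{Im}(\alpha-1)$, a point the paper merely asserts.
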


\begin{proof}
It is well known that $C(X,\mathbb{Z})=\mathbb{Z}^\infty$ (see \cite{PSS} for example). From six-termed exact sequence it is easy to deduce that $K_1(C(X)\rtimes_r\mathbb{F}_2)$ is subgroup of free abelian group and so it is also free abelian group. Let us prove that it has infinitely generated.

From six-termed exact sequence it follows that
$$K_1(C(X)\rtimes_r\mathbb{F}_2)=Ker(\zeta)=\{(F,G)\in C(X,\mathbb{Z})^2:(a-1)F+(b-1)G=0\}$$
Since $\alpha\beta\alpha^{-1}\beta^{-1}=1$ we have
$$M=\{(1\otimes (1-\beta)f,1\otimes (\alpha-1)f)\}\subset Ker(\zeta)$$
Since $Im(\alpha-1)$ is infinitely generated we conclude that $M$ is infinitely generated and hence $Ker(\zeta)$ is infinitely generated too.
\end{proof}

\begin{cor}
Let $\varphi:\mathbb{F}_2\curvearrowright X$ be as above, $\beta=1$ and $\alpha$ be minimal. Then $C(X)\rtimes_r\mathbb{F}_2$ is Kirchberg algebra and moreover
$$K_0(C(X)\rtimes_r\mathbb{F}_2)=\mathbb{Z}^\infty\oplus \left(C(K,\mathbb{Z})\slash
\small\begin{array}{c}\\
\{(\alpha-1)^2f=0\}
\end{array}\normalsize\right)$$
$$K_1(C(X)\rtimes_r\mathbb{F}_2)=\mathbb{Z}^\infty$$
\end{cor}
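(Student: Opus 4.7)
The plan is to apply Propositions 1.2, 1.3 and 1.4 in succession, specialized to $\beta = 1$, with only one step requiring genuine work. Proposition 1.2 gives the Kirchberg statement directly: the commutator condition $\alpha\beta\alpha^{-1}\beta^{-1} = 1$ is automatic when $\beta = 1$, and $\alpha$ is minimal by hypothesis, so both assumptions of that proposition hold.

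For $K_1$ I would invoke Proposition 1.4. Its only non-formal hypothesis is that $\operatorname{Im}(\alpha - 1) \subset C(K,\mathbb{Z})$ is infinitely generated. I would verify this through the kernel: an $\alpha$-invariant $\mathbb{Z}$-valued continuous function on $K$ has clopen level sets, each of which is $\alpha$-invariant, and by minimality of $\alpha$ each such level set is either $\emptyset$ or $K$, so the function is constant. Hence $\ker(\alpha - 1) \cong \mathbb{Z}$ as a subgroup of $C(K,\mathbb{Z}) \cong \mathbb{Z}^\infty$, so $\operatorname{Im}(\alpha - 1) \cong C(K,\mathbb{Z})/\mathbb{Z}$ still has countably infinite rank. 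This is the main (and essentially the only nontrivial) step of the argument.

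For $K_0$ I would substitute $\beta = 1$ into the presentation of Proposition 1.3: every factor of the form $(\beta - 1)$ or $(\beta^{-1} - 1)$ vanishes, and $2 - \beta - \beta^{-1} = 0$ as well. Only the single relation $((2 - \alpha - \alpha^{-1}) f, 0) = 0$ survives, giving
$$K_0\bigl(C(X) \rtimes_r \mathbb{F}_2\bigr) = \bigl(C(K,\mathbb{Z})/\{(2 - \alpha - \alpha^{-1}) f : f \in C(K,\mathbb{Z})\}\bigr) \oplus C(K,\mathbb{Z}).$$
To bring this to the form of the corollary I would use the identity $2 - \alpha - \alpha^{-1} = -\alpha^{-1}(\alpha - 1)^2$. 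Since multiplication by $-\alpha^{-1}$ is an automorphism of $C(K,\mathbb{Z})$, the subgroup $\{(2 - \alpha - \alpha^{-1}) f\}$ coincides with $\{(\alpha - 1)^2 f\}$; and since $C(K,\mathbb{Z}) \cong \mathbb{Z}^\infty$, the free direct summand is exactly the $\mathbb{Z}^\infty$ appearing in the statement of the corollary.
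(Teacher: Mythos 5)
Your argument is correct and follows the paper's skeleton (Proposition 1.2 for the Kirchberg property, Proposition 1.3 specialized at $\beta=1$ for $K_0$, Proposition 1.4 for $K_1$); the $K_0$ reduction via $2-\alpha-\alpha^{-1}=-\alpha^{-1}(\alpha-1)^2$ is exactly the paper's step of replacing $f$ by $\alpha f$ using surjectivity of $\alpha$. The one place where you genuinely diverge is the verification that $\operatorname{Im}(\alpha-1)$ is infinitely generated. The paper does this constructively: for each $n$ it uses minimality to pick $n+1$ distinct points $x,\alpha(x),\dots,\alpha^n(x)$, builds disjoint clopen sets $U_j=\alpha^j(U)$, and exhibits the subgroup generated by the functions $\chi_{U_j}-\chi_{U_{j-1}}=(\alpha-1)\chi_{U_{j-1}}$ together with an explicit epimorphism onto $\mathbb{Z}^n$ given by evaluation at the orbit points. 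You instead compute the kernel: minimality forces every $\alpha$-invariant clopen level set to be $\emptyset$ or $K$, so $\ker(\alpha-1)$ is the constants $\cong\mathbb{Z}$, and rank additivity ($-\otimes\mathbb{Q}$) gives $\operatorname{Im}(\alpha-1)\cong C(K,\mathbb{Z})/\mathbb{Z}$ of infinite rank, hence not finitely generated since it is free abelian as a subgroup of $C(K,\mathbb{Z})\cong\mathbb{Z}^\infty$. Your route is shorter and uses minimality only through the triviality of the invariants; the paper's route is more explicit and produces concrete independent elements of the image, which is the kind of information one might want when identifying generators of $K_1$. Both are complete proofs.
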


\begin{proof}
This is easy follows from Propositions 1.1-1.4. Since $\alpha$ is minimal from Proposition 1.2 we conclude that $C(X)\rtimes_r\mathbb{F}_2$ is Kirchberg algebra.

Next from Proposition 1.3 we get
$$K_0(C(X)\rtimes_r\mathbb{F}_2)=C(K,\mathbb{Z})^2\slash\{((2-\alpha-\alpha^{-1})f,0)=0\}$$
Since $\alpha$ is automorphism then it is surjection, so $\{\alpha f\}_{f\in C(K,\mathbb{F})}=C(K,\mathbb{F})$ and we can deduce that relation $((2-\alpha-\alpha^{-1})f,0)=0$ equivalent to $(2-\alpha-\alpha^{-1})\alpha f,0)=0$. Since there are no any relations on the second coordinate to conclude the calculation of $K_0$ it is enough to notice that $C(K,\mathbb{Z})=\mathbb{Z}^\infty$.

Next for calculation of $K_1$ according to Proposition 1.4 it is enough to check that $Im(\alpha-1)$ is infinitely generated, i.e. for every $n$ we can not generate $Im(\alpha-1)$ with $n$ elements. That's mean that for every $n$ there is surjection $Im(\alpha-1)\to \mathbb{Z}^n$. To prove this let us consider arbitrary $x\in K$. Since $\alpha$ is minimal all $x,\alpha(x),\ldots ,\alpha^n(x)$ are different points. Consider some clopen disjoint neighborhoods $V_j$ of $x_j$. Also consider clopen sets $U=V_0\cap \alpha^{-1}(V_1)\cap\ldots\cap \alpha^{-n}(V_n)$ and $U_j=\alpha^j(U)$. It is easy to see that $U_j$ are disjoint neighborhoods of $\alpha^j(x)$ (since $U_j\subset V_j$). Consider subgroup $M=\left\{\sum\limits_{j=1}^{n}y_j(\chi_{U_j}-\chi_{U_{j-1}}):y_j\in\mathbb{Z} \right\}\subset Im(\alpha-1)$.
It is easy to see that there exists epimorphism $M\to\mathbb{Z}^n$ which is defined by formula $f\mapsto (f(x),f(\alpha(x)),\ldots,f(\alpha^{n-1}(x)))$. So $M$ can not be generated by $n-1$ elements. So $Im(\alpha-1)$ is infinitely generated and so $K_1(C(X)\rtimes_r\mathbb{F}_2)=\mathbb{Z}^\infty$.
\end{proof}

Let us recall the construction of Denjoy homeomorphism of Cantor set (more information you can find for example in \cite{PSS}). Let $T$ be the circle, $\lambda$ be $\pi$-irrational number such that $\lambda<\pi$. Let $\theta_\lambda$ is rotation through an angle $\lambda$. Consider arbitrary
point $a_0\in T$ and its orbit $O=\{a_j\}_{j\in\mathbb{Z}}$ under rotation (here $a_j=\theta_\lambda^j(a_0)$. This orbit is dense and $K=T\backslash O$ is invariant under $\theta_\lambda$ Next let us cut circle $T$ at $a_j$ and put instead it intervals $I_j$ of length $|I_j|=2^{-|j|}$. To be more precisely consider metric function $m$ on $K$ which is defined by
formula $m(x,y)=|[x,y]^*|+\sum_j2^{-|j|}$, where sum runs over such $j$ that $a_j\in[x,y]^*$; and $[x,y]^*$ is arc of the circle from $x$ to $y$ clockwise. Let $|[x,y]^*|$ be the length of it. Let us define distance between $x,y$ by the formula $\rho(x,y)=\min\{m(x,y),m(y,x)\}$ (we should notice that $m$ is not symmetric). It is easy to check that $m$ is metric which induces the topology on $K$. It is easy to see that $K$ is totally disconnected in this topology and have no isolate point. So $K$ is the Cantor set. As clopen base for its topology we can take $\{[a_i,a_j]\}_{i,j\in\mathbb{Z}}$ where $[a_i,a_j]=K\cap [a_i,a_j]^*$. We will call $\theta_\lambda|_K$ Denjoy homeomorphism of the Cantor set with rotation number $\lambda$.

\begin{example}
Let $\beta=1$, $\alpha$ be Denjoy homeomorphism of the Cantor set with rotation number $\lambda$ and $\varphi:\mathbb{F}_2\curvearrowright X$ be as above. Then $C(X)\rtimes_r\mathbb{F}_2$ is Kirchberg algebra and
$$K_0(C(X)\rtimes_r\mathbb{F}_2)=\mathbb{Z}^\infty$$
$$K_1(C(X)\rtimes_r\mathbb{F}_2)=\mathbb{Z}^\infty$$
\end{example}

Notice that in Example 1.6 the action $\varphi$ can not be represent as projective limit of actions on finite sets. So this example does not cover by results of \cite{Suz} where in every cases $K_0$-group has a torsion.

\begin{proof}
It is well known that Denjoy homeomorphism of the Cantor set is minimal and so it is enough to compute $C(K,\mathbb{Z})\slash\{(\alpha-1)^2f=0\}$. Consider $x_j=p_{[a_j,a_{j+1}]}$, $j\in\mathbb{Z}$ where $p_{[a,b]}$ is characteristic function of $[a,b]$. It is easy to see that $x_j$ together with $1=p_{[0,2\pi]}$ generate $C(K,\mathbb{Z})$. Let us prove that they are independent. Let $f=\mu_{-n}x_{-n}+\mu_{-n+1}x_{-n+1}+\ldots+\mu_n x_n+\mu \cdot 1=0$. Consider some two points $\iota_j$ and $\kappa_j$ respectively from right and left side from $a_j$, which are closed enough to $a_j$ to make $[\iota_l,\kappa_j]$ contain just $a_j$ among all $a_i$ with $|i|\leq n$. Then we have $f(\kappa_j)-f(\iota_j)=\mu_j-\mu_{j-1}$. So all $\mu_j$ are equal to each other. To prove that $\mu=\mu_0=0$ we consider Lebesgue measure $\nu$ on circle and measure $\nu_*=\nu|_K$ on $K\subset T$ (notice that $K$ has full measure in $T$). It is easy to see that $\nu_*([0,2\pi])=2\pi$ and $\nu_*([a_j,a_{j+1}])=\lambda$ Then we have
$$0=\int\limits_{K}f d\nu_*=2\pi\cdot\mu+(2n+1)\lambda\cdot\mu_0$$
Since $\lambda$ is $\pi$-irrational we deduce that $\mu=\mu_0=0$. So the linear combination $f$ is trivial and thus $C(K,\mathbb{Z})=\langle 1,x_j\rangle_{j\in\mathbb{Z}}=\langle 1\rangle\oplus\langle x_j\rangle_{j\in\mathbb{Z}}=\mathbb{Z}\oplus\mathbb{Z}^\infty$.
By the construction of Denjoy homeomorphism  we have $\alpha(1)=1$ and $\alpha(x_j)=x_{j+1}$ and so
$$C(K,\mathbb{Z})\slash\{(\alpha-1)^2f=0\}=\mathbb{Z}\oplus\left(\mathbb{Z}^\infty\slash
\small\begin{array}{c}\\
\{x_{j+2}=2x_{j+1}-x_j\}
\end{array}\normalsize\right)
=\mathbb{Z}\oplus\mathbb{Z}^2$$
The last isomorphism is defined on the second summand by the formula $x_0\mapsto 0\oplus (1,0)$, $x_1\mapsto 0\oplus (0,1)$. It is well defined on all other generators since in quotient using relations $x_{j+2}=2x_{j+1}-x_j$ and $x_{-j-2}=2x_{j-1}-x_j$ we can represent all $x_j$ as linear combination of $x_0$ and $x_1$.
So we get
$$K_0(C(X)\rtimes_r\mathbb{F}_2)=\mathbb{Z}^\infty$$
From the Lemma 1.4 we can easy deduce that $K_1(C(X)\rtimes_r\mathbb{F}_2)=\mathbb{Z}^\infty$.
\end{proof}

\section{Operator realization of Denjoy homeomorphism}\label{Section2}

In this section we are going to represent $C(K)\rtimes\mathbb{Z}$ (where crossed product is taken by Denjoy homeomorphism) as $C^*$-algebra generated by weighted shift. Let us remind that {\it joint spectrum} of commuting family of operators $\{B_i\}_{i\in\Omega}$ is set of such $\lambda\in\mathbb{C}^\Omega$ that for every finite subset $\Gamma\subset\Omega$ operator $B_\Gamma=\sum\limits_{i\in\Gamma}(B_i-\Lambda_i)(B_i-\Lambda_i)^*+(B_i-\lambda_i)^*(B_i-\lambda_i)$ is not invertible. Let us denote joint spectrum by $\sigma(\{B_i\})$. Joint spectrum is compact subset of $\mathbb{C}^\Omega$ with respect to product topology. Let $C^*(x_1,x_2,\ldots)$ be $C^*$-algebra which is generated by operators $1,x_1,x_2,\ldots$. The following theorem holds:

\begin{thm}
(\cite{POW}, Theorem 1.4) Let $\{B_j\}_{j\in\Omega}$ be commuting family of normal operators. Then $C^*(\{B_j\})=C(\sigma(\{B_j\}))$.
\end{thm}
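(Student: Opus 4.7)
The plan is to realise $C^*(\{B_j\})$ as continuous functions on its Gelfand spectrum and then match this spectrum with the joint spectrum $\sigma(\{B_j\})$.

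First I would verify that $A:=C^*(\{B_j\})$ is a commutative $C^*$-algebra. Since the $B_j$ are normal and pairwise commuting, the Fuglede--Putnam theorem yields $B_j B_k^* = B_k^* B_j$ for all $j,k$, so the $*$-algebra generated by $\{B_j\}$ is abelian, and norm-closure preserves this. By Gelfand--Naimark, $A\cong C(\hat A)$, where $\hat A$ is the compact Hausdorff character space.

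Next I would define the continuous evaluation map
$$\Phi:\hat A\to\mathbb{C}^\Omega,\qquad \Phi(\chi)=(\chi(B_j))_{j\in\Omega},$$
which is injective because a character is determined by its values on generators of a $C^*$-algebra. The heart of the proof is then to show $\Phi(\hat A)=\sigma(\{B_j\})$. The inclusion $\Phi(\hat A)\subset\sigma(\{B_j\})$ is easy: if $\lambda_j=\chi(B_j)$, then $\chi(B_\Gamma)=0$ for every finite $\Gamma$, so $B_\Gamma$ lies in the proper ideal $\ker\chi$ and is non-invertible. For the converse, I would apply the Gelfand transform: under $A\cong C(\hat A)$ the element $B_\Gamma$ becomes the continuous function $x\mapsto 2\sum_{i\in\Gamma}|\widehat{B_i}(x)-\lambda_i|^2$, which fails to be invertible precisely when the closed set $F_\Gamma=\{x\in\hat A:\widehat{B_i}(x)=\lambda_i\text{ for all }i\in\Gamma\}$ is nonempty. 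If $\lambda\in\sigma(\{B_j\})$, the collection $\{F_\Gamma\}_{\Gamma}$ therefore has the finite intersection property, and compactness of $\hat A$ produces $x\in\bigcap_\Gamma F_\Gamma$, giving a character with $\Phi(x)=\lambda$.

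The main obstacle I expect is this last compactness step: the definition of the joint spectrum through non-invertibility of each finite $B_\Gamma$ is somewhat indirect, and turning it into the existence of an honest common character rests on the finite intersection property for the closed sets $F_\Gamma\subset\hat A$. Once $\Phi$ is shown to be a continuous bijection from the compact Hausdorff space $\hat A$ onto $\sigma(\{B_j\})$ it is automatically a homeomorphism, so the pullback $C(\sigma(\{B_j\}))\xrightarrow{\Phi^{*}} C(\hat A)\cong A$ sends each coordinate function to the corresponding generator $B_j$, yielding the asserted isomorphism $C^*(\{B_j\})=C(\sigma(\{B_j\}))$.
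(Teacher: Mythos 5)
The paper does not prove this theorem at all --- it is quoted verbatim from Power (\cite{POW}, Theorem 1.4) and used as a black box --- so there is no in-paper argument to compare against. Your proposal is a correct, self-contained proof along the standard Gelfand-theoretic lines: commutativity of $C^*(\{B_j\})$ via Fuglede--Putnam, identification of the character space with the joint spectrum through the evaluation map $\Phi$, and the finite-intersection-property argument for surjectivity (using that $F_{\Gamma_1}\cap F_{\Gamma_2}=F_{\Gamma_1\cup\Gamma_2}$, so the family of closed sets is genuinely directed). The only point worth making explicit is the one you pass over silently in the easy inclusion: the paper's definition of $\sigma(\{B_j\})$ refers to non-invertibility of $B_\Gamma$ as an operator, i.e.\ in $B(H)$, whereas your argument shows $B_\Gamma$ is non-invertible in the subalgebra $A=C^*(\{B_j\})$ (it lies in $\ker\chi$); these agree by spectral permanence for unital $C^*$-subalgebras (or simply because $B_\Gamma$ is positive, so its spectrum is the same computed in $A$ or in $B(H)$), and the same identification is needed in the reverse inclusion when you translate non-invertibility of $B_\Gamma$ into non-vanishing of its Gelfand transform on $\hat A$. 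With that one sentence added, the proof is complete.
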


Consider some sequence of bounded sequences $a_i=\{a_i(j)\}\in\mathbb{C}^\Omega$. Let $X_i\in B(\ell^2(\mathbb{Z}))$ be diagonal operator which is defined via formula $X_i(e_j)=a_i(j)e_j$. Consider also symmetric sequences $b_i(j)=a_j(i)$.

\begin{prop}
The following equation holds: $\sigma(\{X_i\})=\overline{\{b_i\}_{i\in\mathbb{Z}}}\subset \mathbb{C}^\mathbb{Z}$
\end{prop}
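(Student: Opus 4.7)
The plan is to unpack the definition of joint spectrum by using the fact that each $X_i$ is diagonal (hence normal), then recognize the resulting condition as a description of the closure in the product topology.

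First I would compute the operator $B_\Gamma$ explicitly. Since every $X_i$ is diagonal in the basis $\{e_j\}$ and since a shift $X_i-\lambda_i$ of a diagonal operator is still diagonal with entries $a_i(j)-\lambda_i$, the combination $(X_i-\lambda_i)(X_i-\lambda_i)^*+(X_i-\lambda_i)^*(X_i-\lambda_i)$ is the diagonal operator with $j$-th entry equal to $2|a_i(j)-\lambda_i|^2$. Summing over the finite set $\Gamma$ shows $B_\Gamma$ is diagonal with entries $2\sum_{i\in\Gamma}|a_i(j)-\lambda_i|^2$. A diagonal operator on $\ell^2(\mathbb{Z})$ is invertible iff its diagonal is bounded away from zero, so $B_\Gamma$ fails to be invertible precisely when
\eQ{\inf_{j\in\mathbb{Z}}\sum_{i\in\Gamma}|a_i(j)-\lambda_i|^2=0.}

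Next I would substitute $a_i(j)=b_j(i)$ and reformulate. The vanishing of the infimum above is equivalent to the statement that for every $\varepsilon>0$ there exists $j\in\mathbb{Z}$ with $\max_{i\in\Gamma}|b_j(i)-\lambda_i|<\varepsilon$. Thus $\lambda\in\sigma(\{X_i\})$ iff for every finite $\Gamma\subset\mathbb{Z}$ and every $\varepsilon>0$ there is some $j$ with $b_j$ lying in the cylinder $\{\mu\in\mathbb{C}^{\mathbb{Z}}:|\mu_i-\lambda_i|<\varepsilon\text{ for }i\in\Gamma\}$.

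Finally I would observe that cylinders of this form are exactly a neighborhood base at $\lambda$ for the product topology on $\mathbb{C}^{\mathbb{Z}}$. Therefore the above condition is precisely the assertion that $\lambda$ lies in the closure of $\{b_j\}_{j\in\mathbb{Z}}$ in this topology, giving $\sigma(\{X_i\})=\overline{\{b_j\}_{j\in\mathbb{Z}}}$. There is no real obstacle here: the argument is bookkeeping between the operator-theoretic definition of joint spectrum and the topological description of closures in a product space, and the only thing to keep straight is the role-swap between rows $a_i$ and columns $b_j$ of the matrix $(a_i(j))$.
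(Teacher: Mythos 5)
Your proof is correct and is exactly the ``easy exercise'' the paper leaves unwritten: diagonalize $B_\Gamma$, note that invertibility of a diagonal operator means its entries are bounded away from zero, and match the resulting condition with the neighborhood base of cylinders for the product topology, with the transpose $a_i(j)=b_j(i)$ doing the bookkeeping. Nothing to add.
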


\begin{proof}
The proof is easy exercise. By $\overline{\{b_i\}_{i\in\mathbb{Z}}}$ we mean closure.
\end{proof}

So we have $C^*(\{X_i\})=C\left(\overline{\{b_i\}_{i\in\mathbb{Z}}}\right)$, i.e. it is possible to describe spectrum of $C^*$-algebra $C^*(\{X_j\})$ rather concrete in terms of sequence $b_i$ (aka $a_i$). It is natura to ask whether exist algebraic conditions or invariants of sequence $a_i$ which know topological information about $\overline{\{b_i\}_{i\in\mathbb{Z}}}$, for example is it possible to find algebraic conditions which can guarantee that this spectrum homeomorphic to sphere $S^n$? But we will not discuss this topic in this article.

Let $x\in\mathbb{C}^\mathbb{Z}$ be bounded sequence and let $T_x$ be corresponding weighted shift which is defined by formula $T_xe_i=x_ie_{i+1}$. It is natural to ask is it possible classify $C^*$-algebra $C^*(T_x)$ in terms of combinatoric or algebraic properties of sequence $x$. It is not known much about such algebras. Next we will need usual shift operator $T$ which can be defined by formula $Te_i=e_{i+1}$. Also let us consider sequences $a_i$ which are $i$-shifts of $x$ and corresponding diagonal operators $X_i$, which are defined via $X_ie_j=a_i(j)e_j=x_{i+j}e_j$. It is easy to see that $T^{-1}X_iT=X_{i+1}$.

\begin{prop}
Let $x\in[1,2]^\mathbb{Z}$. Then $C^*(T_x)=C^*(T,\{X_i\}_{i\in\mathbb{Z}})$.
\end{prop}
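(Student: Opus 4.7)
The plan is to prove the two inclusions separately, exploiting the factorization $T_x = T X_0$ together with the fact that $X_0$ is a positive invertible element. The factorization is verified on basis vectors: $T X_0 e_j = T(x_j e_j) = x_j e_{j+1} = T_x e_j$. This immediately yields $T_x \in C^*(T, X_0) \subset C^*(T, \{X_i\}_{i\in\mathbb{Z}})$, giving one inclusion.

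For the reverse inclusion, I would first extract $X_0$ from $T_x$ by computing $T_x^* T_x e_j = x_j^2 e_j$, whence $T_x^* T_x = X_0^2$ (using $x_j \in [1,2]$, in particular positive real). Since $\sigma(X_0) \subset [1,2]$, the positive element $T_x^* T_x$ has spectrum contained in $[1,4]$, bounded away from zero. Continuous functional calculus applied to the continuous functions $t \mapsto t^{1/2}$ and $t \mapsto t^{-1/2}$ on this spectrum then places both $X_0 = (T_x^* T_x)^{1/2}$ and $X_0^{-1} = (T_x^* T_x)^{-1/2}$ inside $C^*(T_x)$; in particular, the unit $1 = X_0 X_0^{-1}$ belongs to $C^*(T_x)$. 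The bilateral shift $T$ is then recovered via $T = T_x X_0^{-1} \in C^*(T_x)$, and since $T$ is unitary, $T^{-1} = T^* \in C^*(T_x)$ as well. Finally, the conjugation identity $T^{-i} X_0 T^i = X_i$, verified on basis vectors using $T^{-1} X_i T = X_{i+1}$ and iterating in both directions, gives $X_i \in C^*(T_x)$ for every $i \in \mathbb{Z}$.

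The only delicate point is the existence of $X_0^{-1}$ inside $C^*(T_x)$, which is precisely where the hypothesis $x \in [1,2]^{\mathbb{Z}}$ enters: it guarantees $0 \notin \sigma(X_0)$, so that $t \mapsto 1/t$ is continuous on $\sigma(X_0)$ and its functional calculus lands inside $C^*(T_x)$. Without a uniform positive lower bound on $|x_j|$, the element $X_0$ would be merely injective rather than invertible in the algebra, and the clean formula $T = T_x X_0^{-1}$ would no longer be available; one would then have to recover $T$ by approximating $X_0^{-1}$, which might fail altogether if $\inf_j |x_j| = 0$.
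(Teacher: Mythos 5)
Your proposal is correct and follows essentially the same route as the paper: factor $T_x = TX_0$, recover $X_0 = (T_x^*T_x)^{1/2}$ and its inverse by functional calculus (using that the weights are bounded away from zero), obtain $T = T_xX_0^{-1}$, and then get all $X_i$ by conjugation with $T$. You simply spell out the spectral details more explicitly than the paper does.
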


We should remark that $[1,2]$ is not essential here. This proposition also holds in the case when $x$ is positive, bounded and bounded away from zero.

\begin{proof}
On the one hand $T_x=TX_0$, so you can generate $T_x$ by $T$ and $X_0$. On the other hand since $T_x^*e_i=x_{i-1}e_i$ and $x$ is positive and bounded away from zero we get that $X_0$ is invertible and also $X_0=\sqrt{T_x^*T_x}$, $T_x=TX_0^{-1}$, $X_n=T^{-n}X_0T^n$. 
\end{proof}

\begin{prop}
Let $x\in[1,2]^\mathbb{Z}$. Then we have $C^*(T_x)=C(K)\rtimes\mathbb{Z}$ where $C(X)=C^*(\ldots,X_{-1},X_0,X_1,\ldots)$ and the action is defined via $\alpha(X_i)=X_{i-1}=TX_iT^{-1}$
\end{prop}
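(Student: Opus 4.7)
The plan is to identify the representation of $C^*(T_x)$ on $\ell^2(\mathbb{Z})$ as coming from a covariant pair for the dynamical system $(C(K),\mathbb{Z},\alpha)$ and then invoke a gauge-invariant uniqueness argument. By Proposition 2.3 it suffices to work with $C^*(T,\{X_i\}_{i\in\mathbb{Z}})$.

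First I would pin down the commutative subalgebra generated by the $X_i$'s. Since $x$ is real-valued each $X_i$ is self-adjoint, and they mutually commute (being simultaneously diagonal in $\{e_j\}$), so Theorem 2.1 together with Proposition 2.2 yields $C^*(\{X_i\}_{i\in\mathbb{Z}})\cong C(K)$ with $K=\overline{\{b_i\}_{i\in\mathbb{Z}}}$; this is the $C(K)$ appearing in the statement. A direct computation on basis vectors gives $TX_iT^{-1}e_j=TX_ie_{j-1}=x_{i+j-1}e_j=X_{i-1}e_j$, so conjugation by the unitary $T$ preserves $C(K)$ and implements the automorphism $\alpha$ described in the statement. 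Hence $(C(K)\hookrightarrow B(\ell^2(\mathbb{Z})),T)$ is a covariant representation of $(C(K),\mathbb{Z},\alpha)$; since $\mathbb{Z}$ is amenable (so the full and reduced crossed products coincide), the universal property supplies a surjective $*$-homomorphism
\[ \Phi:C(K)\rtimes_\alpha\mathbb{Z}\twoheadrightarrow C^*(T,C(K))=C^*(T_x). \]

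The main obstacle is injectivity of $\Phi$, and my plan is to build a compatible gauge action on $C^*(T_x)$. Define $U_z\in B(\ell^2(\mathbb{Z}))$ by $U_ze_j=z^je_j$ for $z\in\mathbb{T}$; a short calculation shows $U_zTU_z^*=zT$ and $U_zX_iU_z^*=X_i$, so $\gamma_z:=\Ad(U_z)$ preserves $C^*(T_x)$ and defines a strongly continuous $\mathbb{T}$-action. If $\hat\alpha$ denotes the dual action on the crossed product, then $\Phi\circ\hat\alpha_z=\gamma_z\circ\Phi$, and the restriction of $\Phi$ to the fixed-point subalgebra $C(K)\subset C(K)\rtimes_\alpha\mathbb{Z}$ is the concrete inclusion $C(K)\hookrightarrow B(\ell^2(\mathbb{Z}))$, which is manifestly injective. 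Averaging over $\mathbb{T}$ produces conditional expectations $E^{\hat\alpha}$ and $E^\gamma$ onto the two fixed-point subalgebras intertwined by $\Phi$, and $E^{\hat\alpha}$ is faithful by the standard fact about the canonical expectation on a crossed product by an amenable group. A routine computation then shows that if $\Phi(a)=0$ for $a\in C(K)\rtimes_\alpha\mathbb{Z}$, then $E^{\hat\alpha}(a^*a)=0$, whence $a=0$. This yields $\ker\Phi=0$, completing the identification $C^*(T_x)\cong C(K)\rtimes_\alpha\mathbb{Z}$.
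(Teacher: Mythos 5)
Your proof is correct, but it follows a genuinely different route from the paper's. The paper argues entirely with concrete representations: it amplifies the defining representation $\pi$ of $C^*(T,\{X_i\})$ to $\pi_\infty=\bigoplus_{j}T^{-j}\pi T^{j}$ on $\bigoplus_{-\infty}^{+\infty}\ell^2(\mathbb{Z})$, observes that $\operatorname{Im}(\pi)\cong\operatorname{Im}(\pi_\infty)$, and then exhibits an explicit ``flip'' unitary $U[e_i]_j=[e_j]_i$ conjugating $\pi_\infty$ into the regular representation $\gamma$ of the crossed product (checked on the generators $T$ and $X_0$), with amenability of $\mathbb{Z}$ identifying the reduced and full crossed products. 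You instead stay on the single Hilbert space $\ell^2(\mathbb{Z})$, produce the surjection $\Phi$ from the universal property of the covariant pair $(C(K),T)$, and kill the kernel by a gauge-invariant uniqueness argument: the spatially implemented circle action $\Ad(U_z)$, $U_ze_j=z^je_j$, intertwines the dual action, and faithfulness of the canonical expectation $E^{\hat\alpha}$ together with injectivity of $\Phi$ on the fixed-point algebra $C(K)$ forces $\ker\Phi=0$. Both arguments are complete; yours trades the explicit unitary equivalence for the standard machinery of dual actions and faithful conditional expectations (whose faithfulness is itself usually established via the regular representation, so the underlying content is close), while the paper's version is more elementary and self-contained, at the cost of writing out the amplified representation by hand. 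One small point worth making explicit in your version: the identification $C^*(\{X_i\})\cong C(K)$ with $K=\overline{\{b_i\}}$ is equivariant for $\Ad(T)$ versus the shift on $K$, which is exactly what makes $(C(K),T)$ a covariant pair for the action $\alpha$ in the statement; you verify the operator identity $TX_iT^{-1}=X_{i-1}$, which suffices.
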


\begin{proof}
By proposition 2.3 we have $C^*(T_x)=C^*(T,\{X_i\}_{i\in\mathbb{Z}})$. Let $\pi: C(T,\{X_i\}_{i\in\mathbb{Z}}\subset B(\ell^2(\mathbb{Z}))$. Next we are going to consider homomorphism $\pi_\infty=\oplus_{j=-\infty}^{+\infty}T^{-j}\pi T^{j}:C^*(T,\{X_i\})\hookrightarrow B(\oplus_{-\infty}^{+\infty}\ell^2(\mathbb{Z}))$ which is defined via formula (here $|$ is separator for between coordinates with positive and negative indexes)
$$\pi_\infty(a)=\ldots T \pi(a)T^{-1}\oplus |\pi(a)\oplus T^{-1}\pi(a) T\oplus T^{-2}\pi(a) T^{2}\oplus \ldots$$
It is easy to see that we have $C^*$-algebra isomorphism $Im(\pi)\cong Im(\pi_\infty)$. Since $\mathbb{Z}$ is amenable group we can conclude that $C(X)\rtimes_r\mathbb{Z}=C(X)\rtimes\mathbb{Z}$. Remind that reduced crossed product defined as completion of image of $*$-algebraic homomorphism $\gamma: C(X)\rtimes_{alg}\mathbb{Z}\to B(H\otimes\ell^2(\mathbb{Z}))$ which is defined via $\gamma(f)(\xi\otimes e_j)=\omega(\alpha^{-j}(f))\xi\otimes e_j$, $\gamma(u)(\xi\otimes e_j)=\xi\otimes e_{j+1}$ where $f\in C(X)$ and $u$ is generator of $\mathbb{Z}$; here $\alpha:\mathbb{Z}\curvearrowright X$ is action and $\omega :C(X)\hookrightarrow B(H)$ is arbitrary embedding (the resulting crossed product does not depend on embedding. More information about construction and properties of $C^*$-algebra crossed products reader can find in Section 4.1 of \cite{BO}). In our case let $\omega :C(X)\subset B(\ell^2(\mathbb{Z}))$ be identical inclusion (i.e. $\omega=\pi|_{C(X)}$) and consider isomorphism $\ell^2(\mathbb{Z})\otimes\ell^2(\mathbb{Z})=\oplus_{-\infty}^{+\infty}\ell^2(\mathbb{Z})$ we get that our $\gamma$ acts via formulas:
$$\gamma(f)(\ldots,\xi_{-1},|\xi_0,\xi_1,\xi_2,\ldots)=(\ldots,\alpha(f)\xi_{-1},|f\xi,\alpha^{-1}(f)\xi_1,\alpha^{-2}(f)\xi_2,\ldots)$$
$$\gamma(u)(\ldots,\xi_{-1},|\xi_0,\xi_1,\xi_2,\ldots)=(\ldots,\xi_{0},|\xi_1,\xi_2,\xi_3,\ldots)$$
To prove Proposition 2.4 it is enough to check that $\gamma$ and $\pi_\infty$ are unitary conjugate. Let $\{[e_i]_j\}_{i\in\mathbb{Z}}$ be standard basis for $j$-component of $\oplus_{-\infty}^{+\infty}\ell^2(\mathbb{Z})$. It is easy to see that we can define conjugation unitary $U$ via formula $U[e_i]_j=[e_j]_i$. We have
$$\pi_\infty(X_0)[e_i]_j=x_{i+j}[e_i]_j=U^{-1}x_{i+j}[e_j]_i=U^{-1}\gamma(X_0)[e_j]_i=U^{-1}\gamma(X_0)U[e_i]_j$$
$$\pi_\infty(T)[e_i]_j=[e_{i+1}]_j=U^{-1}[e_j]_{i+1}=U^{-1}\gamma(T)[e_j]_i=U^{-1}\gamma(T)U[e_i]_j$$
Since operators $X_0$ and $T$ generate $C^*(T,\{X_i\})$ we get that $\pi_\infty=U^{-1}\gamma U$.
\end{proof}

\begin{prop}
Let $x$ be a periodic sequence with period $n$. Then $A_x= M_n\otimes C(S^1)$.
\end{prop}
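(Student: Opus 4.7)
The plan is to apply Proposition 2.4 to reduce $A_x=C^*(T_x)$ to a crossed product $C(X)\rtimes\mathbb{Z}$, and then identify both $X$ and its $\mathbb{Z}$-action concretely via Proposition 2.2. Taking $n$ to be the minimal period of $x$, we have $b_{i+n}=b_i$ for every $i\in\mathbb{Z}$ while $b_0,\dots,b_{n-1}$ are pairwise distinct points of $\mathbb{C}^{\mathbb{Z}}$, so
$$X=\sigma(\{X_i\})=\overline{\{b_i\}_{i\in\mathbb{Z}}}=\{b_0,\dots,b_{n-1}\}$$
is a discrete space with exactly $n$ points.

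Second, I would track the dynamics under this identification. Since $b_i(X_j)=x_{i+j}$, the automorphism $\alpha(X_j)=X_{j-1}$ pulls back on characters to $\alpha^*(b_i)=b_{i-1}$, so $\mathbb{Z}$ acts on $X$ by a single cyclic permutation of order $n$. Thus $X$ is equivariantly homeomorphic to $\mathbb{Z}/n\mathbb{Z}$ with the translation action, and Proposition 2.4 yields $A_x\cong C(\mathbb{Z}/n\mathbb{Z})\rtimes\mathbb{Z}$.

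Finally, the right-hand side is the standard crossed product of a free transitive finite $\mathbb{Z}$-set: the stabilizer of any point is $n\mathbb{Z}$, and Green's imprimitivity theorem gives
$$C(\mathbb{Z}/n\mathbb{Z})\rtimes\mathbb{Z}\cong M_n\otimes C^*(n\mathbb{Z})\cong M_n\otimes C(S^1).$$
Alternatively, writing $e_j=\chi_{\{j\}}$ and $u$ for the canonical unitary implementing $\alpha$, the elements $E_{jk}=u^je_0u^{-k}$ satisfy the $n\times n$ matrix unit relations, while the unitary $u^n$ lies in the center and generates a copy of $C(S^1)$ commuting with them; this exhibits the isomorphism directly. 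I do not expect a serious obstacle here: the only points requiring care are the correct transport of the $\mathbb{Z}$-action through the spectral identification, and the implicit reading of ``period $n$'' as minimal period (otherwise one instead recovers $M_m\otimes C(S^1)$ with $m$ the minimal period dividing $n$).
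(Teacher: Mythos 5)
Your argument is correct, but it takes a genuinely different route from the paper: the paper's ``proof'' of this proposition is a one-line citation to Proposition V.3.1 of Davidson's book, whose argument is a direct operator-theoretic computation with the weighted shift in $B(\ell^2(\mathbb{Z}))$, whereas you run the statement through the crossed-product machinery of Propositions 2.2--2.4 developed earlier in the section. Your route identifies the spectrum $X=\overline{\{b_i\}}$ as the finite cyclic $\mathbb{Z}$-set $\mathbb{Z}/n\mathbb{Z}$ and then computes $C(\mathbb{Z}/n\mathbb{Z})\rtimes\mathbb{Z}\cong M_n\otimes C(S^1)$ by Green's imprimitivity theorem or by exhibiting matrix units $E_{jk}=u^je_0u^{-k}$ together with the central unitary $u^n$; all of these steps are standard and check out, including the transport of the action to characters ($\alpha^*(b_i)=b_{i-1}$) and your observation that one must read ``period $n$'' as minimal period to get $M_n$ rather than $M_m$ for the minimal period $m\mid n$. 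What your approach buys is a self-contained proof that stays inside the conceptual framework of the section (everything realized as $C(X)\rtimes\mathbb{Z}$), at the cost of making explicit a hypothesis the paper leaves implicit: Proposition 2.4 requires the weights to be positive and bounded away from zero, so you should state that $x$ is assumed positive (which, being periodic, automatically makes it bounded and bounded away from zero); for weights with zero entries or general complex weights the conclusion can fail or requires a preliminary unitary reduction to $|x|$. With that hypothesis made explicit, your proof is complete and arguably more informative than the paper's citation.
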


\begin{proof}
Almost the same fact is proved in Proposition V.3.1 of \cite{Dav}. The proof can be transfer here almost without any changes.
\end{proof}

It is well know the following fact but we give a proof since it is not long:

\begin{prop}
Let $\theta$ be irrational number, $x_n=2\cos(2\pi n\theta)+3)$. Then $A_x=A_\theta$ where $A_\theta=\langle u,v| uv=e^{2\pi i\theta}vu; u,v\text{ - unitary}\rangle$ is irrational rotation $C^*$-algebra.
\end{prop}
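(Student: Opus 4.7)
The plan is to apply Proposition 2.4 to express $A_x=C^*(T_x)$ as a crossed product $C(X)\rtimes\mathbb{Z}$, then identify $X$ explicitly with the circle using the closed-form trigonometric expression for $x_n$, and finally read off that the induced action is rotation by $\theta$.

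First, note that $x_n=2\cos(2\pi n\theta)+3\in[1,5]$, so the sequence $x$ is positive and bounded away from zero. Therefore, by the remark following Proposition 2.3, Propositions 2.3 and 2.4 apply to give $A_x=C(X)\rtimes\mathbb{Z}$ with $C(X)=C^*(\{X_i\}_{i\in\mathbb{Z}})$ and action $\alpha(X_i)=X_{i-1}$. By Proposition 2.2, $X=\overline{\{b_i\}_{i\in\mathbb{Z}}}\subset\mathbb{C}^\mathbb{Z}$, where $b_i(j)=x_{i+j}=2\cos(2\pi(i+j)\theta)+3$.

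Next I would parametrize $X$ by $S^1=\mathbb{R}/\mathbb{Z}$. Define $\Phi:\mathbb{R}/\mathbb{Z}\to\mathbb{C}^\mathbb{Z}$ by $\Phi(t)(j)=2\cos(2\pi(t+j\theta))+3$, so $b_i=\Phi(i\theta\bmod 1)$. The map $\Phi$ is clearly continuous; it is also injective, because the coordinates $\Phi(t)(0)$ and $\Phi(t)(1)$ give $\cos(2\pi t)$ and $\cos(2\pi(t+\theta))$, and expanding the latter via the cosine addition formula with $\sin(2\pi\theta)\neq 0$ (guaranteed by irrationality of $\theta$) recovers $\sin(2\pi t)$ and hence $t\bmod 1$. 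By compactness of $S^1$, $\Phi$ is a homeomorphism onto $\Phi(S^1)$. Since $\theta$ is irrational, the orbit $\{i\theta\bmod 1\}_{i\in\mathbb{Z}}$ is dense in $S^1$, so $X=\overline{\{\Phi(i\theta)\}}=\Phi(S^1)\cong S^1$.

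Finally, I would transport the action to $S^1$. Dualizing $\alpha(X_i)=X_{i-1}$ yields the homeomorphism $\sigma:X\to X$ with $\sigma(b)(i)=b(i-1)$. A short calculation gives
\[
\sigma(\Phi(t))(i)=\Phi(t)(i-1)=2\cos(2\pi((t-\theta)+i\theta))+3=\Phi(t-\theta)(i),
\]
so $\sigma\circ\Phi=\Phi\circ R_{-\theta}$, where $R_{-\theta}$ is rotation by $-\theta$. Hence $A_x\cong C(S^1)\rtimes_{R_{-\theta}}\mathbb{Z}=A_{-\theta}\cong A_\theta$ (the last isomorphism is the standard one given by complex conjugation of the generators). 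The only real technical point is the injectivity of $\Phi$, which is what upgrades density of $\{i\theta\}$ on the circle to full identification $X\cong S^1$; everything else is bookkeeping through Propositions 2.2 and 2.4.
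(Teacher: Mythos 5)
Your argument is correct, but it takes a genuinely different route from the paper. The paper never passes through the crossed-product picture for this proposition: it works directly with generators. It sets $D e_n = e^{2\pi i n\theta}e_n$, notes that $T$ and $D$ satisfy the defining relation of $A_\theta$ so that $C^*(T,D)=A_\theta$ by simplicity, and then shows the two generating sets produce each other: $T_x=(D+D^*+3)T$ in one direction, and in the other $D+D^*=\sqrt{T_x^*T_x}-3$, $T=(\sqrt{T_x^*T_x})^{-1}T_x$, after which $D$ is recovered as a linear combination of $D+D^*$ and $T(D+D^*)T^*=e^{2\pi i\theta}D+e^{-2\pi i\theta}D^*$ (using irrationality of $\theta$ to invert the $2\times 2$ system). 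That proof is shorter and self-contained, needing only Proposition 2.3 and the simplicity of $A_\theta$. Your proof instead runs the sequence through Propositions 2.2--2.4 to realize $A_x$ as $C(X)\rtimes\mathbb{Z}$, identifies $X$ with $S^1$ via the embedding $\Phi(t)(j)=2\cos(2\pi(t+j\theta))+3$ (your injectivity check using $\sin(2\pi\theta)\neq 0$ is the right key step, and compactness upgrades it to a homeomorphism onto the orbit closure), and reads off the action as rotation by $-\theta$; you then invoke the standard isomorphism $C(S^1)\rtimes_{R_{\theta}}\mathbb{Z}\cong A_\theta$ and $A_{-\theta}\cong A_\theta$. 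What your route buys is an explicit dynamical picture of the spectrum, entirely in the spirit of the rest of Section 2 (it parallels the paper's own treatment of the Fibonacci weight in Proposition 2.12); what it costs is reliance on the external identification of the rotation algebra with the circle crossed product, which itself rests on the same simplicity of $A_\theta$ that the paper uses directly, plus the (explicitly permitted) extension of Propositions 2.3--2.4 from $[1,2]$ to $[1,5]$. Both proofs are sound; yours is more conceptual, the paper's more economical.
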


\begin{proof}
Since $x\in[1,5]^\mathbb{Z}$ we can use Proposition 2.3. Let $T$ be standard two side shift and $D$ be diagonal operator which is defined via $De_n=e^{2\pi i n\theta} e_n$. Since $D$ and $T$ satisfy universal universal relation of $A_\theta$ and since this algebra is simple we get $A_\theta=C^*(T,D)$. So we only need to generate $T_x$ by $T$ and $D$ and conversely.

On the one hand we have $T_x=(D+D^*+3)T$.

On the other hand using $T_x=(D+D^++3)T$ it is not difficult to deduce that $D+D^*=\sqrt{T_x^*T_x}-3$ and $T=(\sqrt{T_x^*T_x})^{-1}T_x$. Using universal relation we can get $T(D+D^*)T^*=e^{2\pi i \theta }D+e^{-2\pi i \theta}D^*$. Now it is not difficult to represent $D$ as linear combination of $D+D^*$ and $e^{2\pi i \theta }D+e^{-2\pi i \theta}D^*$.
\end{proof}

\begin{prop}
Let $x\in\mathbb{C}^\mathbb{Z}$ such that $|x_n|=1$. Then $A_x=C(S^1)$.
\end{prop}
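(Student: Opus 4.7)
The plan is to observe that the hypothesis $|x_n|=1$ makes $T_x$ a unitary operator, and then to conjugate it to the standard bilateral shift by a diagonal unitary.

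First I would verify directly that $T_x$ is unitary: from $T_x e_i = x_i e_{i+1}$ and $T_x^* e_i = \bar{x}_{i-1} e_{i-1}$ one computes $T_x^* T_x e_i = |x_i|^2 e_i = e_i$ and $T_x T_x^* e_i = |x_{i-1}|^2 e_i = e_i$. In particular $T_x$ is normal, so by the continuous functional calculus $C^*(T_x) \cong C(\sigma(T_x))$; it remains to identify the spectrum.

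Next I would build a diagonal unitary $U \in B(\ell^2(\mathbb{Z}))$, $U e_i = u_i e_i$ with $|u_i| = 1$, so that $U^* T_x U = T$, the standard bilateral shift. The required relation $u_{i+1} = u_i x_i$ is solved by setting $u_0 = 1$ and extending recursively in both directions (using $|x_i| = 1$ to keep $|u_i|=1$). Thus $T_x$ is unitarily equivalent to $T$, and since $\sigma(T) = S^1$ this gives $\sigma(T_x) = S^1$.

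Combining these two steps, $A_x = C^*(T_x) \cong C(\sigma(T_x)) = C(S^1)$. There is no real obstacle here: the main point is just to recognize that unimodular weights trivialize the weighted shift up to diagonal conjugation, so the whole content of the statement reduces to the well-known spectrum of the bilateral shift together with normal functional calculus.
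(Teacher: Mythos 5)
Your proof is correct, and it reaches the key point --- that $\sigma(T_x)=S^1$ --- by a different route than the paper. Both arguments begin identically: $|x_n|=1$ makes $T_x$ unitary, hence normal, so $A_x=C^*(T_x)\cong C(\sigma(T_x))$ and everything reduces to identifying the spectrum. The paper then computes $\sigma(T_x)$ directly by exhibiting, for each $\lambda$ with $|\lambda|=1$, a Weyl sequence $\xi_n=\chi_n/\|\chi_n\|$ with $\chi_n=\sum_{j=0}^n\alpha_je_j$ and $\alpha_{j+1}=\alpha_jx_j/\lambda$, so that $(T_x-\lambda)\chi_n$ telescopes to two boundary terms of norm $O(1)$ while $\|\chi_n\|\sim\sqrt{n}$. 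You instead perform a diagonal gauge transformation: the unitary $Ue_i=u_ie_i$ with $u_0=1$, $u_{i+1}=u_ix_i$ satisfies $U^*T_xU e_i = u_ix_i\bar u_{i+1}e_{i+1}=e_{i+1}$, so $T_x$ is unitarily equivalent to the unweighted bilateral shift $T$ and $\sigma(T_x)=\sigma(T)=S^1$. Your argument is cleaner and proves slightly more (unitary equivalence of $T_x$ with $T$, not merely equality of spectra), at the cost of invoking the standard fact $\sigma(T)=S^1$; the paper's argument is self-contained and its Weyl-sequence construction is essentially what one would use to prove that standard fact anyway. The recursion defining $u_i$ extends to negative indices via $u_i=u_{i+1}\bar x_i$, and unimodularity of the $x_i$ keeps $U$ unitary, so there is no gap.
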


\begin{proof}
Using classification of commutative $C^*$-algebras we only need to check $\sigma(T_x)=S^1$ where $\sigma(T_x)$ is spectrum of $T_x$ and $S^1$ is unit circle. Since $T_x$ unitary we have $\sigma(T_x)\subset S^1$. On the other hand spectrum contain such $\lambda\in\mathbb{C}$ for which there exists sequence of vectors $\xi_n$ such that $\|\xi_n\|=1$ and $\|(T-\lambda)\xi_n\|\to 0$.  Consider arbitrary $\lambda$ with $|\lambda|=1$ and we can take $\xi_n=\chi_n/\|\chi_n\|$ where $\chi_n=\sum\limits_{j=0}^{n}\alpha_je_j$ where $\alpha_0=1$ and $\alpha_{j+1}=\alpha_jx_{j-1}/\lambda$. So for every $\lambda$ with $|\lambda|=1$ we construct desired sequence and thus $\sigma(T_x)=S^1$.
\end{proof}

\begin{prop}
Let $x\in\{1,2\}^\mathbb{Z}$ be such sequence that we can find in it segments of 1 or 2 of arbitrary long length. Then $A_x$ is not simple.
\end{prop}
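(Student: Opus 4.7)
The plan is to realise $A_x$ as a crossed product via Proposition 2.4, exhibit a nontrivial closed shift-invariant subset of the spectrum, and extract from it a proper nonzero ideal of $A_x$.

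First, I combine Propositions 2.2 and 2.4 to identify $A_x=C(X)\rtimes\mathbb{Z}$, where $X=\overline{\{\sigma^i(x):i\in\mathbb{Z}\}}\subset\{1,2\}^{\mathbb{Z}}$ is the orbit closure of $x$ under the left shift $\sigma$ in the product topology, and $\mathbb{Z}$ acts on $C(X)$ via $\sigma$. Next, without loss of generality assume that $x$ contains arbitrarily long runs of $1$'s (the case of $2$'s is completely symmetric). Pick $j_k\in\mathbb{Z}$ and $n_k\to\infty$ with $x_i=1$ for all $i\in[j_k-n_k,j_k+n_k]$. Then $\sigma^{j_k}(x)$ coincides with the constant sequence $\mathbf{1}=(\ldots,1,1,1,\ldots)$ on $[-n_k,n_k]$, so by compactness of $\{1,2\}^{\mathbb{Z}}$ a subsequence converges in the product topology to $\mathbf{1}$. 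Hence $\mathbf{1}\in X$, and since $\sigma(\mathbf{1})=\mathbf{1}$, the singleton $\{\mathbf{1}\}$ is a closed shift-invariant subset of $X$.

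Now I distinguish two cases. If $x=\mathbf{1}$, then $T_x=T$ is the bilateral shift and by Proposition 2.7 we have $A_x\cong C(S^1)$, which is not simple. Otherwise $x$ has at least one entry equal to $2$, so $x\in X$ is not constantly $1$, and the inclusion $\{\mathbf{1}\}\subsetneq X$ is strict. Then $I=\{f\in C(X):f(\mathbf{1})=0\}$ is a proper nonzero $\sigma$-invariant ideal of $C(X)$, so it generates a nonzero ideal $J=I\rtimes\mathbb{Z}$ of $C(X)\rtimes\mathbb{Z}$. The quotient is $(C(X)/I)\rtimes\mathbb{Z}\cong\mathbb{C}\rtimes\mathbb{Z}\cong C^*(\mathbb{Z})\cong C(S^1)$, which is nonzero; here the reduced/full distinction is irrelevant thanks to amenability of $\mathbb{Z}$, as already used in Proposition 2.4. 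Therefore $J$ is a proper nonzero ideal of $A_x$, and $A_x$ is not simple.

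The only delicate point is the quotient calculation that certifies $J\neq A_x$; everything else is just the observation that the hypothesis of long runs forces a fixed point of the shift to appear in the orbit closure, together with the standard fact that a non-minimal $\mathbb{Z}$-action on a compact Hausdorff space has non-simple crossed product.
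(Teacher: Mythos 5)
Your proof is correct and follows essentially the same route as the paper: identify $A_x$ with $C(X)\rtimes\mathbb{Z}$ via Proposition 2.4, use the arbitrarily long runs to place a constant (hence shift-fixed) sequence in the orbit closure, and conclude non-simplicity from the resulting closed invariant subset. You merely make explicit what the paper leaves implicit -- the construction of the proper nonzero ideal $I\rtimes\mathbb{Z}$ and the degenerate case $x=\mathbf{1}$ -- which is a welcome tightening but not a different argument.
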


\begin{proof}
By Proposition 2.4 we have $A_x=C(X)\rtimes\mathbb{Z}$ where $X=\overline{\{b_i\}_{i\in\mathbb{Z}}}\subset \mathbb{C}^\mathbb{Z}$ and the action is just left shift (recall that $b_i(j)=x_{i+j}$). It is easy to see that the sequence $\bf{2}$ belongs to $X$ since we have the segments of 2 with arbitrary long length (respectively $\bf{1}$ in case of segments of 1), here $\bf{2}$ is sequence such that ${\bf{2}}_i=2$ for every $i$. Since $\bf{2}$ is invariant we get closed subspace invariant under shift and so $A_x$ can not be simple.
\end{proof}

\begin{example}
Let $x\in\{1,2\}^\mathbb{Z}$ be a sequence in which we can find any finite word in alphabet $\{1,2\}$ then $A_x$ is not simple.
\end{example}

It is easy corollary of Proposition 2.8. In this case it is not difficult to see that $X=\{1,2\}^\mathbb{Z}$ (since every $y\in\{1,2\}^\mathbb{Z}$ belongs to closure of orbit of $x$). In general case $X$ is union of some closed invariant under shift subspaces of $\{1,2\}^\mathbb{Z}$. In this case $X$ is union of all closed invariant subspaces and so in some sense $A_x$ is the maximum non simple $C^*$-algebra in this case.

By the Proposition 2.4 in case of $x$ bounded, positive and bounded away from zero all $A_x$ have the form $C(X)\rtimes\mathbb{Z}$ and so rather interesting to apply dynamic theory here. The dynamic systems on the Cantor set are very important in dynamic systems theory since on the one hand they have very nice finite-dimensional approximation and also the Cantor set has tons of interpretation, so we have a lot of instruments to work with then. And on the other hand Cantor systems have universal property, i.e. for every separable compact dynamic system $\mathbb{Z}\curvearrowright X$ there exists dynamic system on the Cantor set $\mathbb{Z}\curvearrowright K$ which admits equivariant surjection $K\to X$, that is dynamic analog of well known topological fact that every separable compact metric space $X$ admits surjection map $K\to X$ of the Cantor set. Minimal Cantor dynamic admits full classification in the following sense:

\begin{thm} (\cite{GPS}, Theorem 2.1) Let $\alpha,\beta\in Homeo(K)$ some minimal homeomorphisms of the Cantor set. Then $C(K)\rtimes_\alpha\mathbb{Z}\cong C(K)\rtimes_\beta\mathbb{Z}$ iff when there exists positive isomorphism $K_0(C(K)\rtimes_\alpha\mathbb{Z})\cong K_0(C(K)\rtimes_\beta\mathbb{Z})$
\end{thm}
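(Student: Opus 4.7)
The plan is to prove the two implications separately. The forward direction ($\Rightarrow$) is functorial: any $*$-isomorphism $\Phi:C(K)\rtimes_\alpha\mathbb{Z}\to C(K)\rtimes_\beta\mathbb{Z}$ induces a group isomorphism on $K_0$, and since both algebras are unital simple AT-algebras of real rank zero (by Putnam's representation of minimal Cantor $\mathbb{Z}$-systems as AF-extensions of a unitary), the order on $K_0$ is determined by Murray--von Neumann classes of projections; therefore the induced map is automatically positive. Hence this direction needs no dynamical input.

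For the reverse direction ($\Leftarrow$) I would follow the three-step strategy that underlies \cite{GPS}. First, invoke the Herman--Putnam--Skau Bratteli--Vershik model to realize each minimal Cantor system $(K,\alpha)$ as the Vershik map on an essentially simple Bratteli diagram; under this identification the ordered $K_0$-group of the crossed product is the dimension group of the diagram, with order unit given by the class of the top vertex. Second, given a positive isomorphism between the two ordered $K_0$-groups, apply an Elliott-type intertwining/telescoping argument at the level of dimension groups to extract a Bratteli diagram equivalence; translating this back to the dynamical level yields a \emph{strong orbit equivalence} between $(K,\alpha)$ and $(K,\beta)$, i.e.\ a homeomorphism $h:K\to K$ sending $\alpha$-orbits to $\beta$-orbits such that the orbit cocycle and its inverse are each continuous away from a single point. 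Third, invoke Putnam's theorem that a strong orbit equivalence of minimal Cantor systems lifts to a $*$-isomorphism of the reduced crossed product $C^*$-algebras.

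The main obstacle is the middle step. Passing from an abstract positive isomorphism of dimension groups to an actual equivalence of telescoped Bratteli diagrams requires careful control of the simplicial partial orders on vertex sets at each level, matching of order units, and a cofinal interleaving of edge sets in the two diagrams; this combinatorial bookkeeping is the technical heart of \cite{GPS}, with no obvious shortcut. Since the present paper only cites the result and uses it as a black box, I would not attempt to reproduce the full argument in situ, but rather record the above plan and refer the reader to \cite{GPS} for the details.
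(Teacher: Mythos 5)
The paper gives no proof of this statement: it is imported verbatim from \cite{GPS} and used purely as a black box, so there is no internal argument to compare yours against. Your outline is a faithful summary of the actual GPS strategy (Bratteli--Vershik model, dimension group $K^0(K,\alpha)\cong C(K,\mathbb{Z})/\operatorname{im}(\alpha_*-\operatorname{id})$, Elliott-type intertwining to get strong orbit equivalence, then lifting to a $*$-isomorphism of crossed products), and you are right that the middle step is the technical core with no shortcut; deferring to \cite{GPS} there is exactly what the paper itself does. Two small remarks. First, the forward direction is even easier than you make it: the positive cone of $K_0$ is by definition generated by classes of projections in matrix algebras, so \emph{any} $*$-isomorphism induces a positive isomorphism --- no appeal to the AT-structure or real rank zero is needed. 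Second, the theorem as stated in the paper drops the order unit, whereas GPS Theorem~2.1 requires the order isomorphism of $K^0$-groups to carry distinguished order unit to order unit (equivalently $[1]$ to $[1]$); your sketch correctly tracks the order units, so it proves the precise GPS statement rather than the slightly looser one quoted here, which is the safer reading.
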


Positivity mean that isomorphism maps positive part of $K_0$-group to positive part. We should remark that in \cite{GPS} more general result is proved: $C(K)\rtimes_\alpha\mathbb{Z}\cong C(K)\rtimes_\beta\mathbb{Z}$ iff $\mathbb{Z}\curvearrowright_\alpha K$ and $\mathbb{Z}\curvearrowright_\beta K$ are strong orbit equivalent but we do not need it.

Let $x\in\{1,2\}^\mathbb{Z}$. Then {\it block growth} is number $p_n(x)$ of different subwords of $x$ of length $n$.

\begin{defn}
The sequence $x\in\{1,2\}^\mathbb{Z}$ is called {\it Sturmian sequence} if $p_n(x)=n+1$.
\end{defn}

It is easy to see that for Sturmian sequence $x$ there exists limit $\alpha(x)=\lim\limits_{N\to\infty}\frac{|P_N(x)|}{2N+1}$ which is called {\it slope}, here $P_N(x)$ is the number of $2$ in the $[-N,N]$-segment (for more information see for example Prop. 6.1.10 in \cite{Fo}). Also we have that $\alpha$ is irrational number (otherwise $x$ would be periodic, but periodic sequence can not be Sturmian). 

Let $\Sigma_\alpha=\overline{\{T^n(\omega)\}_n}\subset\{1,2\}^\mathbb{Z}$ be closure of all shifts of Sturnian sequence $\omega$ with slope $\alpha$. It is known that $\Sigma_\alpha$ is homeomorphic to the Cantor set and does not depend of choice of $\omega$: it is depend just on slope (we recommend to see \cite{SMM} for more information).

Let us remind that {\it Fibonacci sequence} is $\tilde x\in\{1,2\}^\mathbb{N}$ which is defined in the following way: $f_0=1$, $f_1=21$, $f_{n+1}=f_nf_{n-1}$, $\tilde x=\lim f_n$, i.e. $\tilde x=1212112112121\ldots$ It is the most famous example of Sturmian sequence. We need to modify this sequence a bit to get sequence in $\{1,2\}^\mathbb{Z}$.

First let us notice that $f_{n+3}=f_{n+1}f_nf_{n+1}$ so if we consider subsequence $f_{3n}$ it growths not just in right but in all two side. So we get sequence in $\{1,2\}^\mathbb{Z}$, more precisely:
\begin{center}
$$\:|f_0$$
$$\:\: f_1|f_0f_1$$
$$f_4f_1|f_0f_4$$
$$\:\:\:f_7f_4f_1|f_0f_1f_4f_7$$
\end{center}
where $|$ is separator between coordinates with positive and negative index. Remark that on $n$-th step we get $f_{3n}$ which is shifted to the left. Let $x\in\{1,2\}^\mathbb{Z}$ be the limit of these words. It is clear that $x$ has the same subwords as $\tilde x$, so $x$ also Sturmian. It is known that $\alpha(x)=\frac{1}{\phi^2}$ where $\phi$ is golden ratio.

\begin{prop}
Let $x\in\{1,2\}^\mathbb{Z}$ be $\mathbb{Z}$-Fibonacci sequence as above, $T_x$ be corresponding weighted shift, $A_x=C^*(T_x)$. Then
$$C(K)\rtimes\mathbb{Z}\cong A_x$$
where crossed product is taken by Denjoy homeomorphism on the the the the Cantor set with rotation number $\frac{2\pi}{\varphi^2}$
\end{prop}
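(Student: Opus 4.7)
The plan is to reduce the statement to the Giordano--Putnam--Skau classification, Theorem~2.10 above. By Proposition~2.4, $A_x\cong C(X)\rtimes\mathbb{Z}$, where $X\subset\{1,2\}^{\mathbb Z}$ is the orbit closure of $x$ under the two-sided shift and the $\mathbb{Z}$-action on $X$ is precisely that shift. Since the two-sided Fibonacci sequence is Sturmian with slope $1/\phi^{2}$, we have $X=\Sigma_{1/\phi^{2}}$, a Cantor set, and the shift on it is minimal by a standard property of Sturmian subshifts. On the other side, the Denjoy homeomorphism with rotation angle $\lambda=2\pi/\phi^{2}$ semiconjugates onto the minimal irrational rotation of the circle, and is therefore itself a minimal Cantor system.

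With both systems confirmed to be minimal actions of $\mathbb{Z}$ on the Cantor set, Theorem~2.10 reduces the proposition to producing a positive isomorphism between the ordered $K_{0}$-groups of the two crossed products. Both groups should be identified with $\mathbb{Z}+\frac{1}{\phi^{2}}\mathbb{Z}\subset\mathbb{R}$, equipped with the order inherited from $\mathbb{R}$. In each case the system is uniquely ergodic, and the state induced on $K_{0}$ by the unique invariant probability measure embeds $K_{0}$ into $\mathbb{R}$ with image $\mathbb{Z}+\alpha\mathbb{Z}$, where $\alpha$ is the measure of a basic clopen set: namely one of the two arcs $[a_{0},a_{1}]$ on the Denjoy side (of normalized length $1/\phi^{2}$), and a single cylinder set on the Sturmian side (of measure $1/\phi^{2}$ by the formula for the Sturmian slope). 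Matching the positive cones then gives the required order isomorphism.

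The main obstacle is making this ordered $K_{0}$ computation rigorous in both models simultaneously. A cleaner alternative, which I would actually prefer, is to bypass $K$-theory entirely and construct a direct topological conjugacy between the two dynamical systems. The classical way to produce a Sturmian subshift of slope $1/\phi^{2}$ is to code the orbit of a rotation by angle $\lambda=2\pi/\phi^{2}$ through the partition of $T$ into the two arcs $[a_{0},a_{1}]$ and $[a_{1},a_{0}]$; this coding intertwines the rotation with the shift off the orbit $O=\{a_{j}\}$, and the Denjoy ``blow-up'' at $O$ (cutting $T$ at each $a_{j}$ and inserting an interval) is exactly what is needed to promote the coding to a genuine homeomorphism $K\to\Sigma_{1/\phi^{2}}$ which intertwines $\theta_{\lambda}|_{K}$ with the shift. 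The delicate bookkeeping is at the points of $O$: each $a_{j}$ acquires two preimages in $K$, which must correspond to the two one-sided limits of the coding map. Once the conjugacy is verified on a clopen base and on the orbit $O$ separately, the desired $C^{*}$-algebra isomorphism is immediate from functoriality of crossed products.
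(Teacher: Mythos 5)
Your preferred route is essentially the paper's: the paper also identifies $A_x$ with $C(\Sigma_{1/\phi^2})\rtimes\mathbb{Z}$ (via the joint-spectrum description of $C^*(\{X_i\})$ from Propositions 2.2--2.4, plus simplicity of the crossed product to see that the canonical surjection is injective) and then invokes the topological conjugacy between the Sturmian subshift of slope $\alpha$ and the Denjoy system of rotation number $2\pi\alpha$ --- except that the paper simply cites this conjugacy (Theorem 2.11 of the McCann reference) rather than constructing the coding map and handling the doubled points over the cut orbit $O$ as you sketch; your sketch of that construction is the standard one and is correct, including the bookkeeping at the two preimages of each $a_j$. Your first route, via the Giordano--Putnam--Skau theorem and the ordered $K_0$-groups, is genuinely different from what the paper does: it would work, since both systems are uniquely ergodic minimal Cantor systems with ordered $K_0$ equal to $\mathbb{Z}+\frac{1}{\phi^2}\mathbb{Z}\subset\mathbb{R}$ with the natural order unit, but it trades the one explicit conjugacy for two separate $K_0$ computations (the Denjoy one is in the Putnam--Schmidt--Skau reference, the Sturmian one would need its own argument via the invariant measure), and it only yields isomorphism of the $C^*$-algebras rather than conjugacy of the underlying dynamics. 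So the GPS route is heavier machinery for a weaker intermediate conclusion; the conjugacy route, which both you and the paper settle on, is the efficient one.
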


\begin{proof}
It is known that $\mathbb{Z}\curvearrowright \Sigma_\alpha$ conjugates to Denjoy system $\mathbb{Z}\curvearrowright K$ with rotation number $2\pi\alpha$ for all irrational numbers $\alpha$, i.e. $C(\Sigma_\alpha)\rtimes\mathbb{Z}\cong C(K)\rtimes\mathbb{Z}$ (see for example Theorem 2.11 from \cite{SMM}). Let us recall that $X_i\in B(\ell^2(\mathbb{Z}))$ are diagonal operators which are defined via $X_ie_j=x_{i+j}e_j$ and $\Sigma_\alpha=\overline{\{T^n(\omega)\}_n}$. By the Proposition 2.2 we have $C(\Sigma_\alpha)=C(\{X_i\})$, moreover natural isomorphism is $\mathbb{Z}$-equivariant, where $\mathbb{Z}$ acts on $C(\{X_i\})$ via conjugation by $T$. From the universal property we get epimorphism
$$C(\Sigma_\alpha)\rtimes\mathbb{Z}\to C^*(\{X_i\}, T)$$
Since action $\mathbb{Z}\curvearrowright K$ is minimal and free we have that $C^*$-algebra $C(\Sigma_\alpha)\rtimes\mathbb{Z}$ is simple, so $C(\Sigma_\alpha)\rtimes\mathbb{Z}\cong C^*(\{X_i\}, T)$. By Proposition 2.3 we have
$$C(K)\rtimes\mathbb{Z}\cong A_x$$
So finally we get $A_x\cong C^*(T_x)$.
\end{proof}

{\bf Open problems}

It is not much known about $C^*$-algebras generated by weighted shift. We would like to end with some questions which seems to be very interesting.

1) By Proposition 2.4 for $x\in [1,2]^\mathbb{Z}$ we have $C(T_x)=C(X)\rtimes\mathbb{Z}$. It would be nice to compute $K_0(C(T_x))$ (or get some exact sequence for it) in terms of combinatorial or algebraic properties of $x$. Six-terms exact sequence for crossed product by $\mathbb{Z}$ seems to be powerful tool, but it is also not clear how combinatorial properties of $x$ can recover topological information about $X$: this also seems to be very interesting question.

2) Find some classifiable class of $C^*$-algebras generated by weighted shift, i.e. find some $\Omega\subset [1,2]^\mathbb{Z}$ and some algebraic combinatorial invariant $Alg(\cdot)$ such that for every $x,y\in\Omega$ we would have:
$$C^*(T_x)\cong C^*(T_y)\:\:\:\Longleftrightarrow\:\:\: Alg(x)=Alg(y)$$
Let us remark that Proposition 2.12 together with Theorem 2.10 of \cite{PSS} can be iterpretated in the following way: for set $\Omega=\{x\in\{1,2\}^\mathbb{Z}: p_n(x)=n+1\}$ of Sturmian sequences and for every $x,y\in \Omega$ we have
$$C^*(T_x)\cong C^*(T_y)\:\:\Longleftrightarrow\:\: \alpha(x)=\alpha(y)\:\:\text{or}\:\:\alpha(x)=1-\alpha(y)$$
It would be interesting to extend $\Omega$ or find some another. It is unknown, for example, is it possible to classify in algebraic combinatorial terms $C^*$-algebras generated by weighted shift with weights from
$$\Omega=\{x\in\{1,2\}^\mathbb{Z}:\text{ for all } n \text{ holds } n+1\leq p_n(x)\leq n+2\}$$

%


\end{document}